\declaretheorem{theorem}
\declaretheorem{lemma}
\declaretheorem{proposition}
\declaretheorem{fact}
\declaretheoremstyle[qed=$\square$]{definitionwithend}
\declaretheorem[style=definitionwithend]{assumption}
\declaretheorem[style=definitionwithend]{remark}
\crefname{assumption}{Assumption}{Assumptions}
\crefname{setting}{Setting}{Setting}
\crefname{conjecture}{Conjecture}{Conjectures}
\crefname{fact}{Fact}{Facts}
\newcommand{\by}{\times}
\newcommand{\norm}[1]{\ensuremath{\left\lVert #1 \right\rVert}}
\newcommand{\ip}[1]{\ensuremath{\left\langle #1 \right\rangle}}
\newcommand{\grad}{\ensuremath{\nabla}}
\newcommand{\set}[1]{\left\{#1\right\}}
\def\R{{\mathbb{R}}}
\newcommand{\framedheader}[3]{
  \framebox[\textwidth]{
    \vbox{
      \vspace{2mm}
      \hbox to \textwidth {\hspace{1em}\today \hfill #1\hspace{1em}}
      \vspace{4mm}
      \hbox to \textwidth {\hfill \Large{#2} \hfill}
      \vspace{2mm}
    }
  }
  \vspace*{4mm}
}
\newcommand{\vr}{\varrho}
\newcommand{\bbracket}[1]{\ensuremath{\left\llbracket #1 \right\rrbracket}}
\newcommand{\hleft}{\mathfrak{h}_{\textup{left}}}
\newcommand{\hright}{\mathfrak{h}_\textup{right}}
\title{Accelerated Objective Gap and Gradient Norm\\ Convergence for Gradient Descent via Long Steps\footnote{This manuscript greatly simplifies and improves the analysis of stepsize patterns that first appeared in an unpublished technical report by the same authors~\cite{TechnicalLongSteps}.}}
\date{\today}
\author{
    Benjamin Grimmer\footnote{Johns Hopkins University, Department of Applied Mathematics and Statistics, \texttt{grimmer@jhu.edu}}
    \and
    Kevin Shu\footnote{Georgia Institute of Technology, School of Mathematics, \texttt{kshu8@gatech.edu}}
    \and
    Alex L.\ Wang\footnote{Purdue University, Daniels School of Business, \texttt{wang5984@purdue.edu}}
}
\begin{document}

\maketitle

\begin{abstract}
This work considers gradient descent for $L$-smooth convex optimization with stepsizes larger than the classic regime where descent can be ensured.
The stepsize schedules considered are similar to but differ slightly from the recent silver stepsizes of Altschuler and Parrilo.
For one of our stepsize sequences, we prove a $O(1/N^{1.2716\dots})$ convergence rate in terms of objective gap decrease and for the other, we show the same rate of decrease for squared-gradient-norm decrease. This first result improves on the recent result of Altschuler and Parrilo by a constant factor, while the second results improve on the exponent of the prior best squared-gradient-norm convergence guarantee of $O(1/N)$.

\end{abstract}

\section{Introduction}
Gradient descent is one of the most classic optimization methods---minimizing a function by repeatedly updating a candidate solution in the steepest descent direction. Here, we consider unconstrained minimization of an $L$-smooth convex function $f\colon \mathbb{R}^n \rightarrow \mathbb{R}$ via $N$ steps of gradient descent
\begin{equation}
	x_{i+1} = x_i - \frac{h_i}{L}\nabla f(x_i) \qquad \forall i=0\dots N-1  \label{eq:GD}
\end{equation}
given an initialization $x_0\in\mathbb{R}^n$ and stepsize schedule $h = (h_0,h_1,\dots, h_{N-1})$ normalized by $1/L$. In this short work, we provide selections of stepsizes $h_i$ yielding best-known convergence guarantees in terms of the terminal iterate's objective gap $f(x_N) - f(x_\star)$ and squared gradient norm $\|\nabla f(x_N)\|^2$.

Classically, the literature has primarily considered stepsizes $h_i \in (0,2)$ as one can guarantee a monotone decrease in the objective value at each iteration.
We will refer to the regime where $h_i\in(0,2)$ as the ``short stepsize'' regime.
The works~\cite{drori2012PerformanceOF,taylor2017interpolation,taylor2017smooth,Teboulle2022}, enabled by considering related performance estimation problems, have established tight descriptions of the convergence of gradient descent under a range of stepsize policies in this regime. When convergence is measured by $f(x_N) - f(x_\star)$ or by $\|\nabla f(x_N)\|^2$, the best convergence rates are of the form $O(L\|x_0-x_\star\|^2/N)$ or $O(L(f(x_0)-f(x_\star))/N)$, respectively.

Recently, there has been interest in developing theory beyond the short stepsize regime. Although using ``long steps'' means the objective gap may no longer monotonically decrease, it also provides an opportunity to improve the quality of the final iterate. The theses of Daccache~\cite{Daccache2019} and Eloi~\cite{Eloi2022} provided complete characterizations of gradient descent's performance for smooth convex optimization when $N\leq 3$.
A numerical search using a branch-and-bound methodology was conducted by~\cite{gupta2023branch} to identify optimal stepsizes up to $N=50$.
For strongly convex optimization, Altschuler's thesis~\cite[Chapter 8]{altschuler2018greed} derived the optimal stepsizes when $N\leq 3$ for contracting either the distance to optimal or the gradient's norm. Inductively applying these patterns produced the first convergence guarantees for a long step method. More recently, Grimmer~\cite{Grimmer2023-long} provided an inductive proof technique for non-strongly convex problems based on ``straightforward'' stepsize patterns, showing increasing improvements in the constants of convergence rates from repeating a pattern of up to $127$ stepsizes.

Concurrently, two lines of work carried these long step ideas forward, designing sequences of stepsizes which improve the classic $O(1/N)$ convergence rates of gradient descent with short steps by a polynomial factor. Expanding their prior work~\cite{altschuler2018greed}, Altschuler and Parrilo proposed the ``silver'' stepsize patterns for strongly convex~\cite{altschuler2023accelerationPartI} and then for convex optimization~\cite{altschuler2023accelerationPartII}, based on the silver ratio $\vr:=1+\sqrt{2}$. For smooth convex optimization, they derive an objective gap convergence rate of $O(1/N^{\log_2(\vr)})$ where $\log_2(\vr)\approx 1.2716$. At the same time, this work's authors designed arbitrarily long sequences of straightforward stepsizes, facilitating the proof of a weaker $O(1/N^{1.0564})$ convergence rate for the objective gap, presented in the technical report~\cite{TechnicalLongSteps}.

This paper considers the same stepsize sequences that we proposed in our prior unpublished technical report~\cite{TechnicalLongSteps} but provides much shorter and stronger analyses of these sequences. We will show that our stepsize sequences achieve an $O(1/N^{\log_2(\vr)})$ convergence rate in objective gap and squared gradient norm. Our guarantees on objective gap convergence improve upon our previous $O(1/N^{1.0564})$ rate in exponent and upon the convergence rate of \cite{altschuler2023accelerationPartII} by constant factors. Our results on squared gradient norm convergence appear to be entirely new, and the first results improving upon gradient descent's classic short stepsize $O(1/N)$ rates.
Our improved analysis and its presentation is inspired directly by the recent work of Altschuler and Parrilo~(see \cref{rem:hist}).

\paragraph{Our Contributions.} Our results utilize a sequence of stepsizes $\mathfrak{h}^{(k)}_\mathtt{left} \in \R^N$, where $N=2^k-1$. We will also define $\mathfrak{h}^{(k)}_\mathtt{right}$ to be the sequence obtained by reversing the entries of $\mathfrak{h}^{(k)}_\mathtt{left}$.
Our convergence guarantees rely on the following quantity definable either in terms of sums or products of our proposed stepsizes
$$ r_k := 2\sum_{i=0}^{N-1}(\mathfrak{h}^{(k)}_\mathtt{left})_i +1 = \prod_{i=0}^{N-1} (1 - (\mathfrak{h}^{(k)}_\mathtt{left})_i)^{-2} \simeq \left((\vr - 1)(1 + \vr^{-1/2})\right)N^{\log_2(\vr)} \ . $$
Here, the $\simeq$ sign denotes equality up to lower-order terms.

We defer exact definitions of $\hleft^{(k)}$ and $\hright^{(k)}$ to \Cref{sec:constructions} and first state our guarantees.

\begin{theorem}\label{thm:h_left}
	For any $L$-smooth convex $f$ with a minimizer $x_\star$ and $k\geq 0$, gradient descent~\eqref{eq:GD} with stepsizes $h=\mathfrak{h}^{(k)}_\mathtt{left}$ after $N=2^k-1$ steps has
	$$ \frac{1}{2}\|\nabla f(x_{N}) \|^2 \leq \frac{L(f(x_0) - f(x_\star))}{r_k} \simeq \frac{1}{(\vr - 1)(1 + \vr^{-1/2})}\cdot\frac{L(f(x_0) - f(x_\star))}{N^{\log_2(\vr)}} \ . $$
\end{theorem}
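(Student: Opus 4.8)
The plan is to follow the performance-estimation paradigm and produce an explicit dual certificate. Write $g_i := \nabla f(x_i)$ and $f_i := f(x_i)$, and recall the interpolation inequalities of Taylor, Hendrickx and Glineur: $L$-smoothness together with convexity of $f$ is equivalent to
$$ f_i \;\ge\; f_j + \ip{g_j,\; x_i-x_j} + \frac{1}{2L}\norm{g_i-g_j}^2 $$
for all indices $i,j$ ranging over the iterates $x_0,\dots,x_N$ and the minimizer $x_\star$ (with $g_\star=0$). Using the update rule to rewrite every iterate as an affine function of $x_0$ and $g_0,\dots,g_{N-1}$, the goal is to exhibit nonnegative multipliers $\lambda_{ij}\ge 0$ so that the corresponding weighted sum of these inequalities rearranges into an identity of the form
$$ f_0 - f_\star \;=\; \frac{r_k}{2L}\norm{g_N}^2 \;+\; \frac1L\,Q(g_0,\dots,g_N) \;+\; R , $$
where $Q$ is a positive semidefinite quadratic form and $R\ge 0$ is the nonnegatively-weighted sum of the interpolation slacks; since $Q,R\ge 0$, this identity is exactly \Cref{thm:h_left}. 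The same certificate can equivalently be packaged as a Lyapunov/potential-function argument along the trajectory, which is often cleaner to present.

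The crucial point is to build the multipliers --- equivalently, the potential --- \emph{recursively in $k$}, mirroring the recursive construction of $\mathfrak{h}^{(k)}_\mathtt{left}$ from \Cref{sec:constructions}: a length-$(2^k-1)$ schedule consists of a shorter schedule, one central long step, and another shorter schedule, and the ``left''/``right'' labelling is precisely the reversal symmetry that interchanges ``objective gap in'' with ``gradient norm out'' (the $H$-duality between function-value and gradient-norm guarantees for gradient methods). Accordingly I would prove, by induction on $k$, a scale-invariant statement bundling the gradient-norm certificate for $\mathfrak{h}^{(k)}_\mathtt{left}$ together with the companion objective-gap certificate for its reversal $\mathfrak{h}^{(k)}_\mathtt{right}$; the level-$k$ certificate is assembled by placing a suitably rescaled copy of the level-$(k-1)$ left certificate on the first $2^{k-1}-1$ steps, a reversed copy on the last $2^{k-1}-1$ steps, and finitely many extra interpolation inequalities linking the central iterate, the two endpoints, and $x_\star$. (Alternatively, one could prove the objective-gap statement for $\mathfrak{h}^{(k)}_\mathtt{right}$ and then invoke $H$-duality verbatim.)

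Running in parallel, one bookkeeps the constant. The gluing above forces a recursion on the certificate's constant; one checks that this recursion is satisfied both by $2\sum_i(\mathfrak{h}^{(k)}_\mathtt{left})_i+1$ and by $\prod_i(1-(\mathfrak{h}^{(k)}_\mathtt{left})_i)^{-2}$ --- the additive and multiplicative invariants of the schedule --- so that the two closed forms for $r_k$ agree and coincide with the constant appearing in the certificate. Solving this recursion asymptotically, whose fixed point is governed by the silver ratio $\vr=1+\sqrt2$, yields $r_k\simeq(\vr-1)(1+\vr^{-1/2})\,N^{\log_2\vr}$, giving the displayed estimate.

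I expect the main obstacle to be the inductive step: making the two recursively-built half-certificates ``stitch'' cleanly at the central step. Concretely, one must verify that the central stepsize and the relative scaling of the two halves are tuned so that, after summing everything, every cross term between the left block and the right block cancels, the leftover quadratic in the gradients is genuinely positive semidefinite, and all multipliers stay nonnegative; this is where the particular arithmetic of the stepsizes (and of $\vr$) is forced, and where a careless induction would fail. Everything else is routine: the base cases $k=0$ (which is just the elementary bound $\norm{\nabla f(x_0)}^2\le 2L(f(x_0)-f(x_\star))$) and $k=1$, and the choice of an inductive invariant strong enough --- scale invariant, and carrying just enough side information about the internal iterates --- to be preserved under the gluing.
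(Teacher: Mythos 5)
Your overall paradigm is the right one (an explicit nonnegative-multiplier certificate for the interpolation inequalities $Q_{i,j}\geq 0$, built recursively and tracking the constant $r_k$ through both its additive and multiplicative closed forms), and indeed the paper reduces \Cref{thm:h_left} to exactly such a certificate (\Cref{thm:grad_norm_decrease}) plus the single extra inequality $Q_{2^k-1,\star}\geq 0$. However, there is a concrete structural error in your inductive plan. You propose to glue ``a rescaled copy of the level-$(k-1)$ left certificate on the first half, a reversed copy on the last half,'' bundling the gradient-norm certificate for $\hleft$ with the objective-gap certificate for its reversal $\hright$ as a joint inductive invariant. But the schedules are not built that way: by \eqref{eq:h_left_def}, $\hleft^{(k+1)}=[\hleft^{(k)},\alpha_k,\pi^{(k)}]$, so the second half is the \emph{silver} schedule $\pi^{(k)}$, not $\hright^{(k)}$ (and symmetrically $\hright^{(k+1)}=[\pi^{(k)},\alpha_k,\hright^{(k)}]$ has the silver schedule as its \emph{first} half). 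Your proposed invariant therefore never supplies a certificate for the half on which $\pi^{(k)}$ sits, and the induction does not close. The missing ingredient is an auxiliary identity for the silver schedule itself (the paper's \Cref{prop:p_b}): an exact expression for $\sum_{i,j}B_{i,j}Q_{i,j}$ involving the terms $f_{2^k-1}-f_i-\tfrac12\norm{g_i}^2-\ip{g_i,x_0-x_i}$, a $\norm{x_{2^k-1}-x_0}^2$ term, and a $\norm{g_{2^k-1}}^2$ term. This is a genuinely third kind of statement, not obtainable from either of your two bundled certificates, and it is what gets rescaled and glued onto the $\pi^{(k)}$ half in the paper's recursion $A_{k+1}=\operatorname{diag}(A_k,\tfrac{r_{k+1}}{\vr^{2k-1}}B_k)+(\text{boundary correction})$.

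Two smaller points. First, your fallback of invoking H-duality ``verbatim'' to transfer the $\hright$ objective-gap result to the $\hleft$ gradient-norm result is not justified here: the paper only observes that the symmetry \emph{appears related} to H-duality and proves both certificates independently; you would need to verify that fixed-stepsize gradient descent with these schedules falls within the scope of that duality theory rather than assuming it. Second, even granting the corrected recursive structure, the substance of the proof lies in verifying that the boundary corrections make all cross terms cancel exactly (the paper reduces this to checking that certain $4\times 4$ matrices vanish, using the identities of \Cref{lem:r-formulas}) and that $r_{k+1}\geq 4\vr^k$ keeps the multipliers nonnegative; your proposal correctly identifies this as the hard part but does not carry it out.
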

\begin{theorem}\label{thm:h_right}
	For any $L$-smooth convex $f$ with a minimizer $x_\star$ and $k\geq 0$, gradient descent~\eqref{eq:GD} with stepsizes $h=\mathfrak{h}^{(k)}_\mathtt{right}$ after $N=2^k-1$ steps has
	$$ f(x_N)-f(x_\star) \leq \frac{\frac{L}{2}\|x_0 - x_\star\|^2}{r_k}  \simeq \frac{1}{(\vr - 1)(1 + \vr^{-1/2})}\cdot\frac{\frac{L}{2}\|x_0 - x_\star\|^2}{ N^{\log_2(\vr)}} \ . $$
\end{theorem}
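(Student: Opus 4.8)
The plan is to prove \Cref{thm:h_right} by strong induction on $k$, in the spirit of the Altschuler--Parrilo silver-stepsize analysis. First I would normalize so that $L = 1$ (rescaling $f$), $x_\star = 0$ (translating), and $f(x_\star) = 0$ (subtracting a constant), so that the target becomes $r_k\, f(x_N) \le \tfrac12\|x_0\|^2$, with the $\simeq$ form then immediate from the estimate on $r_k$ recorded above. The base case $k=0$ has $N=0$, $r_0 = 1$, and $x_N = x_0$, so the claim is exactly the $1$-smoothness bound $f(x_0) \le \tfrac12\|x_0\|^2$ (using $\nabla f(x_\star) = 0$).

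For the inductive step I would invoke the recursive structure of $\mathfrak{h}^{(k)}_\mathtt{right}$ from \Cref{sec:constructions}: the schedule splits into a length-$(2^{k-1}-1)$ sub-schedule from the same family, a single ``joining'' step $x_{m+1} = x_m - a_k\nabla f(x_m)$ with $m = 2^{k-1}-1$, and a further length-$(2^{k-1}-1)$ sub-schedule from the family, where the joining stepsize $a_k$ is precisely the value making the two formulas for $r_k$ agree, i.e. $r_k = r_{k-1}^2(1-a_k)^{-2} = 2r_{k-1} + 2a_k - 1$. The bare objective-gap bound is too weak to be chained across the joining step, so the induction hypothesis must be strengthened to a Lyapunov-type inequality that also retains control of the terminal iterate and its gradient --- schematically, that running the level-$j$ schedule from $y_0$ to $y_M$ gives $\tfrac12\|y_0\|^2 \ge r_j\, f(y_M) + \Psi_j\big(y_M, \nabla f(y_M)\big)$ for a suitable nonnegative quadratic correction $\Psi_j$ that vanishes when $j=0$ and whose exact form is forced by the recursion. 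Applying this to the first sub-run, taking the joining step, and applying it again to the second sub-run should telescope, with the correction terms together with the $L$-smooth-convex interpolation inequalities at the joining step conspiring to supply the extra factor $(1-a_k)^{-2}$ that promotes $r_{k-1}$ to $r_k$.

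I expect the main obstacle to be precisely this joining-step bookkeeping: choosing the correction $\Psi_j$ and the nonnegative multipliers on the interpolation inequalities among $x_m$, $x_{m+1}$, $x_\star$ (and the first iterate of the second sub-run) so that the resulting quadratic form closes as a perfect square with exactly the leading constant $(1-a_k)^{-2}$, with no slack --- which is where the defining identity $r_{k-1}^2(1-a_k)^{-2} = 2r_{k-1}+2a_k-1$ is consumed. As an independent route and sanity check, \Cref{thm:h_right} should also follow directly from \Cref{thm:h_left} via the time-reversal duality between minimizing the gradient norm subject to a bounded objective gap and minimizing the objective gap subject to a bounded distance to the minimizer: reversing a step schedule exchanges these two problems with the \emph{same} optimal constant, and since $\mathfrak{h}^{(k)}_\mathtt{right}$ is by definition the reversal of $\mathfrak{h}^{(k)}_\mathtt{left}$ while the constant $L/r_k$ appears in both statements, the gradient-norm guarantee of \Cref{thm:h_left} transposes into the objective-gap guarantee of \Cref{thm:h_right}.
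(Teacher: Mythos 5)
Your high-level plan (strong induction via a recursively glued certificate, with a strengthened Lyapunov-type hypothesis closing as a perfect square at the joining step) is the right genre of argument, but it rests on a wrong reading of the recursion, and that error propagates. The schedule is \emph{not} built from two sub-schedules of the same family: by \eqref{eq:h_right_def}, $\hright^{(k+1)} = [\pi^{(k)},\alpha_k,\hright^{(k)}]$, i.e.\ a \emph{silver} block $\pi^{(k)}$ glued to a right-heavy block $\hright^{(k)}$. Consequently your claimed defining identities $r_k = r_{k-1}^2(1-a_k)^{-2} = 2r_{k-1}+2a_k-1$ are not the ones satisfied by $\alpha_k$; the correct relations are $r_{k+1} = r_k + 2(\vr^k-1) + 2\alpha_k = \vr^{2k}(\alpha_k-1)^{-2}\,r_k$, using $\sum_i \pi^{(k)}_i = \vr^k-1$ and $\prod_i(\pi^{(k)}_i-1)^2 = \vr^{-2k}$ (\Cref{lem:silver_identities}). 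With your identities you would be computing $\alpha_k$ and $r_k$ for a different stepsize schedule, so the "no slack" perfect-square cancellation you are counting on would not close. Relatedly, because the left block is silver rather than inductive, the argument needs a separate, self-contained identity for what $\pi^{(k)}$ contributes — in the paper this is \Cref{prop:p_b}, a specific aggregated combination $p_B^{(k)}$ of interpolation inequalities for the silver block — and your plan has no analogue of it.

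The second substantive gap is the form of the strengthened induction hypothesis. A correction $\Psi_j$ depending only on the terminal iterate and its gradient is not enough: the certificate that actually telescopes (\Cref{thm:fn_val_decrease}) carries a nonnegative weight vector $c_k$ over \emph{all} iterates, a weighted sum $\sum_i (c_k)_i\bigl(f_i - f_N + \tfrac12\norm{g_i}^2 + \ip{g_i, x_0-x_i}\bigr)$, and the aggregate momentum term $-\tfrac12\norm{\sum_i (c_k)_i g_i}^2$, together with the normalization $\sum_i (c_k)_i = \sqrt{r_k}$ that lets one complete the square against $\tfrac12\norm{x_0-x_\star}^2$ at the end. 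Finally, your proposed sanity check via time-reversal duality from \Cref{thm:h_left} is not a proof: the H-duality result of the cited work applies to methods admitting a particular certificate structure, and invoking it here would require verifying that hypothesis, which is essentially as much work as the direct argument. So the proposal as written has the right architecture but would fail at the gluing step; to repair it you need the correct recursion $[\pi^{(k)},\alpha_k,\hright^{(k)}]$, the silver-block lemma, and the $c_k$-weighted form of the induction hypothesis.
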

\begin{theorem}\label{thm:h_right_h_left}
	For any $L$-smooth convex $f$ with a minimizer $x_\star$ and $k\geq 0$, gradient descent~\eqref{eq:GD} with stepsizes $h=[\mathfrak{h}^{(k)}_\mathtt{right}, \mathfrak{h}^{(k)}_\mathtt{left}]$ after $N=2^{k+1}-2$ steps has
	$$ \|\nabla f(x_{N}) \| \leq \frac{L\|x_0 - x_\star\|}{r_k}  \simeq \frac{\vr}{(\vr -1)(1+\vr^{-1/2})}\frac{L\|x_0 - x_\star\|}{N^{\log_2(\vr)}} \ . $$
\end{theorem}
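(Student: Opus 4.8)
We need to prove Theorem `thm:h_right_h_left`, which bounds the gradient norm $\|\nabla f(x_N)\|$ after running gradient descent with the concatenated stepsize sequence $[\mathfrak{h}^{(k)}_\mathtt{right}, \mathfrak{h}^{(k)}_\mathtt{left}]$ for $N = 2^{k+1} - 2$ steps. The bound is $\|\nabla f(x_N)\| \leq L\|x_0 - x_\star\|/r_k$.

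**Strategy:** The natural approach is to compose Theorems `thm:h_right` and `thm:h_left`. The first half of the run uses $\mathfrak{h}^{(k)}_\mathtt{right}$, which by Theorem `thm:h_right` controls the objective gap at the midpoint iterate (call it $x_M$ where $M = 2^k - 1$) in terms of the initial distance. The second half uses $\mathfrak{h}^{(k)}_\mathtt{left}$, which by Theorem `thm:h_left` controls the squared gradient norm at the final iterate in terms of the objective gap at $x_M$. Chaining:
$$\frac{1}{2}\|\nabla f(x_N)\|^2 \leq \frac{L(f(x_M) - f(x_\star))}{r_k} \leq \frac{L}{r_k} \cdot \frac{\frac{L}{2}\|x_0 - x_\star\|^2}{r_k} = \frac{L^2 \|x_0 - x_\star\|^2}{2 r_k^2}.$$

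This gives $\|\nabla f(x_N)\|^2 \leq L^2\|x_0-x_\star\|^2/r_k^2$, hence $\|\nabla f(x_N)\| \leq L\|x_0-x_\star\|/r_k$.

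**Checking the asymptotic:** With $r_k \simeq (\vr-1)(1+\vr^{-1/2}) N_k^{\log_2\vr}$ where $N_k = 2^k - 1$, and the actual step count $N = 2^{k+1} - 2 = 2N_k$. So $N_k = N/2$, giving $r_k \simeq (\vr-1)(1+\vr^{-1/2})(N/2)^{\log_2\vr} = (\vr-1)(1+\vr^{-1/2}) \cdot \vr^{-1} \cdot N^{\log_2\vr}$, since $2^{\log_2\vr} = \vr$. Then $L\|x_0-x_\star\|/r_k \simeq \frac{\vr}{(\vr-1)(1+\vr^{-1/2})} \cdot \frac{L\|x_0-x_\star\|}{N^{\log_2\vr}}$. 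This matches the stated theorem exactly.

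**Main obstacle:** The only subtlety is that $x_M$ (the midpoint iterate after the $\mathfrak{h}^{(k)}_\mathtt{right}$ phase) is a minimizer-agnostic object — we apply Theorem `thm:h_left` with the "initialization" being $x_M$. We need that $f$ restricted/continued is still $L$-smooth convex with the same minimizer $x_\star$, which is automatic. We also need that the two phases compose correctly as a single gradient descent run, which is immediate from the definition of the concatenated stepsize vector. No real obstacle.

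---

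The plan is to obtain \Cref{thm:h_right_h_left} by directly composing \Cref{thm:h_right} and \Cref{thm:h_left}. Let $k \geq 0$, set $M = 2^k - 1$, and consider gradient descent~\eqref{eq:GD} run with the concatenated stepsize schedule $h = [\mathfrak{h}^{(k)}_\mathtt{right}, \mathfrak{h}^{(k)}_\mathtt{left}]$ for $N = 2^{k+1} - 2 = 2M$ steps from $x_0$. The first $M$ steps use exactly the schedule $\mathfrak{h}^{(k)}_\mathtt{right}$, producing the intermediate iterate $x_M$; the remaining $M$ steps use exactly $\mathfrak{h}^{(k)}_\mathtt{left}$, and since gradient descent is memoryless, this second phase is itself a run of~\eqref{eq:GD} with stepsizes $\mathfrak{h}^{(k)}_\mathtt{left}$ initialized at $x_M$.

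First, applying \Cref{thm:h_right} to the first phase (initialization $x_0$, schedule $\mathfrak{h}^{(k)}_\mathtt{right}$, $M$ steps) gives
\begin{equation*}
    f(x_M) - f(x_\star) \;\leq\; \frac{\frac{L}{2}\|x_0 - x_\star\|^2}{r_k} \ .
\end{equation*}
Next, applying \Cref{thm:h_left} to the second phase (initialization $x_M$, schedule $\mathfrak{h}^{(k)}_\mathtt{left}$, $M$ steps), using that $f$ is still $L$-smooth convex with minimizer $x_\star$, gives
\begin{equation*}
    \frac{1}{2}\|\nabla f(x_N)\|^2 \;\leq\; \frac{L\,(f(x_M) - f(x_\star))}{r_k} \ .
\end{equation*}
Chaining these two inequalities yields
\begin{equation*}
    \frac{1}{2}\|\nabla f(x_N)\|^2 \;\leq\; \frac{L}{r_k}\cdot\frac{\frac{L}{2}\|x_0 - x_\star\|^2}{r_k} \;=\; \frac{L^2\|x_0 - x_\star\|^2}{2\,r_k^2} \ ,
\end{equation*}
and taking square roots gives the claimed bound $\|\nabla f(x_N)\| \leq L\|x_0 - x_\star\|/r_k$.

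Finally, one translates the $r_k$ bound into the stated asymptotic. Here the step count is $N = 2M$ with $M = 2^k - 1$, so $M \simeq N/2$, and using $r_k \simeq (\vr - 1)(1 + \vr^{-1/2}) M^{\log_2(\vr)}$ together with $2^{\log_2(\vr)} = \vr$ gives $r_k \simeq (\vr - 1)(1 + \vr^{-1/2}) \vr^{-1} N^{\log_2(\vr)}$, so that $L\|x_0 - x_\star\|/r_k \simeq \frac{\vr}{(\vr - 1)(1 + \vr^{-1/2})} \cdot \frac{L\|x_0 - x_\star\|}{N^{\log_2(\vr)}}$, as stated. The only point requiring any care is the observation that the second phase is a bona fide gradient descent run from $x_M$ so that \Cref{thm:h_left} applies verbatim; this is immediate from the update rule~\eqref{eq:GD}, so there is no real obstacle.
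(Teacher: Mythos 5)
Your proof is correct and is essentially identical to the paper's own argument: the paper also obtains \Cref{thm:h_right_h_left} by chaining \Cref{thm:h_right} on the first $2^k-1$ steps with \Cref{thm:h_left} on the second half started from the midpoint iterate, then uses $(N/2)^{-\log_2(\vr)}=\vr N^{-\log_2(\vr)}$ for the asymptotic constant. Nothing further is needed.
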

The first two theorems are proven in Sections~\ref{sec:h_left} and~\ref{sec:h_right}. The third result is an immediate consequence of the first two as
$$ \frac{1}{2}\|\nabla f(x_{2^{k+1}-2}) \|^2 \leq \frac{L(f(x_{2^k -1}) - f(x_\star))}{r_k} \leq \frac{\frac{L^2}{2}\|x_0 - x_\star\|^2}{r_k^2} $$
and $(N/2)^{-\log_2(\vr)} = \vr N^{-\log_2(\vr)}$.

\Cref{tab:numeric_comp} compares the convergence guarantee in \Cref{thm:h_right} with that of \cite{altschuler2023accelerationPartII} for the silver stepsize schedule, and \cite{gupta2023branch} for the stepsize schedules found via branch-and-bound for $N = 1,3,7,15,31$.

\begin{table}[t]
    \centering
    \begin{tabular}{cccc}\toprule
    $N$ & \Cref{thm:h_right} & \cite[Theorem 1.1]{altschuler2023accelerationPartII} & \cite[Section 6.1]{gupta2023branch} \\ \hline
    $1$ & 0.125 & 0.182 & 0.125\\
    $3$ & 0.0429 & 0.0798 & 0.0429\\
    $7$ & 0.0164 & 0.0344& 0.0163\\
    $15$ & 0.00654& 0.0145 & 0.00659\\
    $31$ & 0.00266& 0.00606 & 0.00272\\
    \bottomrule
    \end{tabular}
    \caption{Numeric comparison of the convergence guarantees of \Cref{thm:h_right}, \cite[Theorem 1.1]{altschuler2023accelerationPartII}, and \cite[Section 6.1]{gupta2023branch} for $N=1,3,7,15,31$ steps of gradient descent. Each table entry corresponds to an upper bound on $f(x_N)- f(x^\star)$ assuming that $\norm{x_0-x^\star}\leq 1$ and $f$ is a $1$-smooth convex function.}
    \label{tab:numeric_comp}
\end{table}

\begin{remark}[On Optimality of Rates]
	Theorem~\ref{thm:h_right} improves on the convergence rate of the silver stepsize schedule~\cite{altschuler2023accelerationPartII} asymptotically by a constant factor of $(\vr - 1)(1+\vr^{-1/2})\approx 2.32439$. Therein, Altschuler and Parrilo claim that a forthcoming paper will show no gradient descent stepsize scheme can do better than $O(1/N^{\log_2(\vr)})$ in objective gap convergence.
	
	Theorems~\ref{thm:h_left} and~\ref{thm:h_right} show that applying the same stepsizes in reverse order mirrors the convergence rate between gradient norm and objective gap. This symmetry appears closely related to the H-duality theory of~\cite{kim2023timereversed}. Therein, they show for methods with a certain type of inductive proof, reversing the order of steps (and momentum) provides a similar conversion between gradient norm and objective gap convergence. If such a symmetry is fundamental, conjecturing $O(1/N^{\log_2(\vr)})$ is also the optimal rate for gradient norm convergence is well motivated.
	
	Note although $O(1/N^{\log_2(\vr)})$ is conjectured to be the optimal rate for gradient descent among all possible stepsize selections, it is not optimal among all gradient methods. Nesterov's accelerated method~\cite{Nesterov1983} attains an objective gap rate of $O(1/N^2)$, matching the $O(1/N^2)$ lower bound for gradient methods of~\cite{Yudin1982}. Similarly, $O(1/N^2)$ is the optimal convergence rate for reducing gradient norm, attained by the method of~\cite{KimFessler2021}.
\end{remark}
\begin{remark}[On Tightness of our Analysis]
	All three of these results are tight for the step size sequences considered here, i.e., there exists an $L$-smooth convex function attaining our proven bounds on convergence. As a result, any further improvement in convergence rates (even by constants) will require a modification of the stepsize schedule. 
    We provide two (univariate) worst-case functions for each result:
    First, consider the quadratic function $f(x) = \frac{L}{2}x^2$. For any $x_0$ and stepsize sequence $h$, gradient descent's terminal iterate is $\prod_{i=0}^{N-1} (1-h_i) x_0$. For any $L$ and $x_0$, a simple calculation verifies that Theorems~\ref{thm:h_left}-\ref{thm:h_right_h_left} are met with equality.
	Next, consider the family of Huber functions of the form $\phi_{\eta}(x) = \frac{L}{2}x^2$ if $|x|\leq \eta$ and $L\eta|x| - \frac{L\eta^2}{2}$ otherwise.
    Note that $x_\star = 0$ and $f_\star = 0$ for any Huber function of this form.
    Direct calculations shows that for any $x_0$, \Cref{thm:h_left} is tight when $\eta =2|x_0|/(1+r_k)$ and
    Theorems~\ref{thm:h_right} and~\ref{thm:h_right_h_left} are tight when $\eta=|x_0|/r_k$.
	A simple consequence of Huber functions attaining the worst-case performance, observed in~\cite{luner2024averaging}, is that reporting an averaging of the iterates $\sum \sigma_i x_i$ instead of $x_N$ strictly worsens the worst-case performance of gradient descent with our proposed stepsizes. 
\end{remark}
\begin{remark}[On Faster Linear Convergence]
	For $\mu$-strongly convex optimization, gradient descent with $h_i=1$ contracts the objective gap and the distance to $x_\star$ every iteration.
	Similarly, our Theorems~\ref{thm:h_left}-\ref{thm:h_right_h_left} all yield contractions and hence, if repeatedly applied, accelerated linear convergence rates under an additional assumption implied by $\mu$-strong convexity. If $\frac{1}{2}\|\nabla f(x)\|^2 \geq \mu(f(x) - f(x_\star))$ holds for all $x\in\mathbb{R}^n$, then Theorem~\ref{thm:h_left} ensures a contraction of $f(x_N) - f(x_\star) \leq \frac{L}{\mu r_k} (f(x_0) - f(x_\star))$.
    We can set $r_k \geq 2L/\mu$ by taking $k\simeq \log_\vr(L/\mu)$, so that $f(x_N)-f(x_\star)\leq \frac{1}{2}(f(x_0)-f(x_\star))$. Consequently, repeatedly applying the sequence of stepsizes $\mathfrak{h}^{(k)}_\mathtt{left}$ yields an accelerated linear rate of $O\left((L/\mu)^{\log_\vr 2}\log(1/\epsilon)\right)$ gradient descent steps to achieve a suboptimality of $\epsilon(f(x_0)-f(x_\star))$. Similarly, if $f(x)-f(x_\star) \geq \frac{\mu}{2}\|x-x_\star\|^2$ or $\|\nabla f(x)\| \geq \mu\|x-x_\star\|$ holds for all $x\in\mathbb{R}^n$, Theorems~\ref{thm:h_right} and~\ref{thm:h_right_h_left} provide accelerated linear convergence rates on the distance to $x_\star$ when applied, respectively.
\end{remark}

\begin{remark}[Historical notes]
    \label{rem:hist}
The stepsize sequences in the current paper first appeared in our unpublished technical report~\cite{TechnicalLongSteps}. In that paper, we considered the sequence $\frac{1}{2}[\mathfrak{h}^{(k)}_\mathtt{left},r_k+1,\mathfrak{h}^{(k)}_\mathtt{right}]$ and showed via the straightforward machinery of \cite{Grimmer2023-long} that one could use these sequences to achieve $O(1/N^{1.056})$ convergence rates.
The proof of straightforwardness required a tedious calculation verifying that an exponentially-sized matrix $\lambda^{(k)}\in\R^{(2^{k+1}+1) \times (2^{k+1}+1)}$, which was constructed entry by entry, was a valid certificate for straightforwardness.
Concurrent to~\cite{TechnicalLongSteps}, Altschuler and Parrilo~\cite{altschuler2023accelerationPartII,altschuler2023accelerationPartI} presented the silver stepsize sequence and showed that it could achieve an $O(1/N^{\log_2\vr})$ convergence rate. Although their proof also relied on building and verifying an exponentially-sized certificate, it had much more streamlined inductive nature due to the recursive definition of their certificates. They called this technique \emph{recursive gluing}.

The convergence rates in the present paper for $\mathfrak{h}^{(k)}_\mathtt{left}$ and $\mathfrak{h}^{(k)}_\mathtt{right}$ will be proved using certificates that are \emph{submatrices} of our previous certificate $\lambda^{(k)}$.
Although the matrix $\lambda^{(k)}$ (and its submatrices) have not changed, we will give a new \emph{recursive} definition of these certificates that allow us to use the recursive gluing technique of~\cite{altschuler2023accelerationPartII,altschuler2023accelerationPartI} to similarly shorten our previously tedious calculations.
\end{remark}


\section{Constructing Stepsize Schedules $\hleft^{(k)}$ and $\hright^{(k)}$}
\label{sec:constructions}

Begin by defining $\beta_k\coloneqq 1 + \vr^{k-1}$ for all $k\geq 0$, where $\vr = 1+\sqrt{2}$ is the previously introduced \emph{silver ratio} which arose in the stepsize schedules of~\cite{gupta2023branch,altschuler2018greed,altschuler2023accelerationPartI,altschuler2023accelerationPartII,TechnicalLongSteps}.
For $k\geq 1$, we will define the $k$th \emph{silver stepsize schedule} $\pi^{(k)} \in\R^{2^k - 1}$ as
\begin{gather}
	\pi^{(1)} \coloneqq [\sqrt{2}]
	\qquad
	\text{and}\qquad
	\pi^{(k+1)} \coloneqq [\pi^{(k)},\beta_k, \pi^{(k)}]. \label{eq:silver_def}
\end{gather}
We will $0$-index $\pi^{(k)}$.
Note that with this indexing convention, we have that
$\pi^{(k)}_i = \beta_{\nu(i+1)}$,
where $\nu(m)$ denotes the exponent on $2$ in the prime factorization of $m$.

The silver stepsize schedule was shown~\cite{altschuler2023accelerationPartII} to achieve the following guarantee on $L$-smooth convex functions: Let $N = 2^k - 1$, then gradient descent with stepsizes $h=\pi^{(k)}$ has
\begin{equation}
	f(x_N) - f^* \leq \frac{L\norm{x_0 - x_\star}^2}{\sqrt{1+4\vr^{2k}-3}} \simeq \frac{L\norm{x_0 - x_\star}^2}{2N^{\log_2(\vr)}}. \label{eq:silver_rate}
\end{equation}

Although this stepsize schedule achieves the conjectured optimal rate of $O(1/N^{\log_2(\vr)})$, it is not optimal in terms of constants. For example, the branch-and-bound optimization over all stepsizes of~\cite{gupta2023branch} found the optimal one step is $[3/2]$ and the optimal three steps are $[\sqrt{2}, 1+\sqrt{2}, 3/2]$, in contrast to the silver patterns of $[\sqrt{2}]$ and $[\sqrt{2},2,\sqrt{2}]$.

We consider a similar construction to the silver stepsize schedule, modified to produce the optimal $N=1$ and $N=3$ stepsizes. Given that the optimal three steps $[\sqrt{2}, 1+\sqrt{2}, 3/2]$ are no longer mirror symmetric, placing a larger weight on the right, we denote the corresponding stepsize schedule $\hright^{(k)}\in\R^{2^{k} - 1}$. As with $\pi^{(k)}$, we $0$-index $\hright^{(k)}$ and its mirror $\hleft^{(k)}$.

Our ``right-side heavy'' stepsizes are recursively defined, utilizing the silver stepsizes as
\begin{gather}
	\hright^{(1)} \coloneqq [3/2]
	\qquad
	\text{and}\qquad
	\hright^{(k+1)} \coloneqq [\pi^{(k)},\alpha_k, \hright^{(k)}]. \label{eq:h_right_def}
\end{gather}
where $\alpha_k$ is the unique solution to
\begin{align}
	\label{eq:defining_alpha}
	\begin{cases}
		\alpha_k\geq 1\\
		1 + 2\sum_{i=0}^{2^{k+1} - 2} (\hright^{(k+1)})_i = \prod_{i=0}^{2^{k+1} - 2} \left((\hright^{(k+1)})_i - 1\right)^{-2}
	\end{cases}.
\end{align}

\begin{remark}
\label{rem:dasgupta}
This construction matches the stepsizes found via numeric search in~\cite{gupta2023branch} for $\hright^{(1)}$, $\hright^{(2)}$.
By solving the associated performance estimation problem, we numerically find our construction $\hright^{(3)}$ has a worse worst-case convergence guarantee than the stepsize sequence found in~\cite{gupta2023branch} of length 7.
Interestingly, our constructions $\hright^{(4)}$, and $\hright^{(5)}$ outperform the stepsize sequences numerically found by~\cite{gupta2023branch} of length 15 and 31 respectively.
\end{remark}

We denote the common value that the definition of $\alpha_k$ ensures by
$$r_k \coloneqq 1 + 2\sum_{i=0}^{2^k - 2} (\hright^{(k)})_i = \prod_{i=0}^{2^k - 2} \left((\hright^{(k)})_i - 1\right)^{-2}.$$
For example, $r_1=4$. An equivalent \emph{explicit} recurrence for generating $\alpha_k$ and $r_k$ is stated in Lemma~\ref{lem:explicit_alpha_mu}. Enforcing that these two values are equal is motivated by the conjectured optimal choice of a constant stepsize being exactly the one that balances these two terms; this was first conjectured in~\cite{drori2012PerformanceOF} and further studied and motivated by~\cite{Taylor2015SmoothSC}. Setting these two quantities equal ensures that the worst case convergence rate over quadratic functions and Huber functions are equal.

We define $\hleft^{(k)}$ to be the steps in $\hright^{(k)}$ in reverse order. Explicitly:
\begin{gather}
	\hleft^{(1)} \coloneqq [3/2]
	\qquad
	\text{and}\qquad
	\hleft^{(k+1)} \coloneqq [\hleft^{(k)},\alpha_k, \pi^{(k)}]. \label{eq:h_left_def}
\end{gather}

Useful identities relating $\alpha_k$, $\beta_k$, and $r_k$ can be found in \Cref{sec:useful_identitites}.

\subsection{Proof Strategy for Theorems~\ref{thm:h_left} and~\ref{thm:h_right}}
Without loss of generality, we can assume the smoothness constant $L$ is equal to one by considering the objective $\frac{1}{L}f$ instead.
We will repeatedly use the following assumption, so we abbreviate it:
\begin{assumption}
	\label{as:first_order_info}
	Suppose $f_0, \dots, f_N, f_\star$, $x_0, \dots, x_N, x_\star$, and $g_0, \dots, g_N, g_\star$ is the set of first-order information generated by gradient descent with stepsizes $h=(h_0,\dots, h_{N-1})$ starting at $x_0$ on a $1$-smooth convex function $f$. That is, $f(x_i) = f_i$ and $\grad f(x_i) = g_i$ with $g_\star=0$.
\end{assumption}
Our proofs of \Cref{thm:h_left,thm:h_right} will use the following classic result~\cite[Theorem 2.1.5]{nesterov-2018-textbook}:
\begin{fact}
	\label{fact:coercivity}
	Suppose $f:\R^n\to\R$ is a $1$-smooth convex function and $x,y\in\R^n$. Then,
	\begin{align*}
		Q_{x,y}\coloneqq f(x) - f(y) - \ip{\grad f(y), x - y} - \frac{1}{2}\norm{\grad f(x) - \grad f(y)}^2
	\end{align*}
	is nonnegative.
\end{fact}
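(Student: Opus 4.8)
The final statement to prove is Fact~\ref{fact:coercivity} (the coercivity/co-coercivity inequality for $1$-smooth convex functions). Although the paper cites this as a classic result, here is how I would prove it from scratch.

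\textbf{Proof strategy.} The plan is to reduce to the scalar fact that for a $1$-smooth convex function, the gradient map is $1$-co-coercive, or equivalently to exploit that $g(z) \coloneqq \frac{1}{2}\norm{z}^2 - f(z)$ is convex when $f$ is $1$-smooth and convex (this is the standard consequence of smoothness plus convexity, via the fact that $\grad g = \mathrm{id} - \grad f$ is monotone because $\grad f$ is $1$-Lipschitz and a gradient of a convex function). First I would fix $y$ and consider the ``Bregman-type'' function
\begin{align*}
	\phi(x) \coloneqq f(x) - f(y) - \ip{\grad f(y), x - y},
\end{align*}
which is nonnegative, convex, $1$-smooth, and minimized at $x = y$ with $\phi(y) = 0$. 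The goal becomes showing $\phi(x) \geq \frac{1}{2}\norm{\grad\phi(x)}^2$ for all $x$, since $\grad\phi(x) = \grad f(x) - \grad f(y)$ and this is exactly the claimed inequality $Q_{x,y}\geq 0$.

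\textbf{Key steps.} Given $x$, apply the standard $1$-smoothness descent inequality $\phi(z) \leq \phi(x) + \ip{\grad\phi(x), z - x} + \frac{1}{2}\norm{z-x}^2$ at the point $z = x - \grad\phi(x)$ (one gradient-descent step with stepsize $1$ on $\phi$). This yields $\phi(z) \leq \phi(x) - \frac{1}{2}\norm{\grad\phi(x)}^2$. Since $\phi \geq 0$ everywhere, we get $0 \leq \phi(z) \leq \phi(x) - \frac{1}{2}\norm{\grad\phi(x)}^2$, i.e. $\phi(x) \geq \frac{1}{2}\norm{\grad\phi(x)}^2$, which is $Q_{x,y} \geq 0$. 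The only facts used are: (i) convexity of $f$ to ensure $\phi \geq 0$ (the tangent line at $y$ underestimates $f$); (ii) $1$-smoothness of $f$ for the descent inequality applied to $\phi$.

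\textbf{Main obstacle.} There is no serious obstacle; the one subtlety worth stating carefully is that the descent inequality must be invoked for $\phi$ rather than $f$ directly, and that its nonnegativity (a global, not local, property, coming from convexity) is what closes the argument — without convexity the inequality would fail. I would present it in two or three lines, noting that this is Theorem~2.1.5 of~\cite{nesterov-2018-textbook} and is sometimes phrased as the $1$-co-coercivity of $\grad f$.
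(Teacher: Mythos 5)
Your proof is correct: the reduction to the Bregman-type function $\phi(x) = f(x) - f(y) - \ip{\grad f(y), x-y}$, followed by one exact gradient step on $\phi$ and the nonnegativity of $\phi$, is precisely the standard argument behind Theorem~2.1.5 of Nesterov's textbook, which is all the paper cites for this fact (it gives no proof of its own). The only cosmetic remark is that the detour in your first paragraph about the convexity of $\tfrac{1}{2}\norm{z}^2 - f(z)$ is never used in the actual argument and could be dropped.
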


When the set $\set{x_\star, x_0, x_1,\dots,x_n}$ is clear from context, we will abbreviate $Q_{i,j}\coloneqq Q_{x_i, x_j}$.

\begin{remark}
	\label{rem:interpolation}
	There is a strong converse to \Cref{fact:coercivity} given by the interpolation theorem of~\cite{taylor2017interpolation}, which shows that a set of first-order information $\set{x_i, f_i, g_i}$ is interpolable via a $1$-smooth convex function, i.e., there exists a $1$-smooth convex function $f$ satisfying $f(x_i) = f_i$ and $\grad f(x_i) = g_i$, if and only $Q_{i,j}\geq 0$ for all pairs $i\neq j$. We will not require this converse in our proofs.
\end{remark}

\Cref{thm:h_left,thm:h_right} follow from similar proof strategies. The critical technique is to derive nontrivial inequalities by considering carefully selected nonnegative combinations of the $Q_{i,j}$ as $\sum_{i,j} \lambda_{i,j}Q_{i,j}\geq 0$ for any selection $\lambda\geq 0$ by \Cref{fact:coercivity}. This technical idea has been extensively explored in the performance estimation problem literature over the past decade.
In particular, our convergence theory relies on the following two nontrivial inequalities.
\begin{proposition}
	\label{thm:grad_norm_decrease}
	Let $k \geq 1$ and invoke \Cref{as:first_order_info} with $h = \hleft^{(k)}$. Then, there exist nonnegative multipliers $A^{(k)}\in\R^{2^k \times 2^k}$ such that 
	\begin{align*}
		f_0 - f_{2^{k}-1} - \frac{r_k - 1}{2} \norm{g_{2^{k}-1}}^2 = \sum_{i,j=0}^{2^k-1} A^{(k)}_{i,j}Q_{i,j} \geq 0.
	\end{align*}
\end{proposition}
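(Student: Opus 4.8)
The plan is to prove \Cref{thm:grad_norm_decrease} by induction on $k$, following the recursion $\hleft^{(k+1)} = [\hleft^{(k)},\alpha_k,\pi^{(k)}]$ from \eqref{eq:h_left_def} and, in the spirit of recursive gluing, exhibiting the certificate $A^{(k)}$ itself recursively while verifying both its nonnegativity and the claimed identity at each step. The base case $k=1$ is immediate: here $\hleft^{(1)}=[3/2]$, $r_1=4$, there are only the two iterates $x_0$ and $x_1 = x_0 - \tfrac32 g_0$, and expanding $Q_{0,1}$ and $Q_{1,0}$ and matching the coefficients of $f_0-f_1$, $\ip{g_0,g_1}$, $\|g_0\|^2$, and $\|g_1\|^2$ shows that taking $A^{(1)}_{0,1}=2$, $A^{(1)}_{1,0}=1$ (diagonal entries zero) gives $2Q_{0,1}+Q_{1,0} = f_0 - f_1 - \tfrac32\|g_1\|^2$, as required.

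For the inductive step, write $M = 2^k-1$ and split the iterates of gradient descent with $h=\hleft^{(k+1)}$ into the prefix $x_0,\dots,x_M$ produced by $\hleft^{(k)}$, the bridging step $x_{M+1} = x_M - \alpha_k g_M$, and the suffix $x_{M+1},\dots,x_{2M+1}$ produced by the silver schedule $\pi^{(k)}$. On the prefix we apply the inductive hypothesis, obtaining nonnegative $A^{(k)}$ with $\sum_{i,j=0}^{M} A^{(k)}_{i,j}Q_{i,j} = f_0 - f_M - \tfrac{r_k-1}{2}\|g_M\|^2$, which we place in the top-left $(M+1)\times(M+1)$ block of $A^{(k+1)}$. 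On the suffix we glue in a certificate for the silver schedule $\pi^{(k)}$ in the gradient-norm reduction form suited to this setting (relating the suffix's entry data to $\|g_{2M+1}\|^2$, up to correction terms involving $g_M$ that the glue will absorb); such a certificate is available from \cite{altschuler2023accelerationPartII} (using that $\pi^{(k)}$ is a palindrome, together with the time-reversal symmetry of \cite{kim2023timereversed}), or can be re-derived by running the same recursion on \eqref{eq:silver_def}. These multipliers form the bottom-right block of $A^{(k+1)}$. Finally we add a short explicit list of ``glue'' entries in the positions coupling the last prefix iterate $x_M$, the bridge image $x_{M+1}$, and the last suffix iterate $x_{2M+1}$ (principally $Q_{M+1,M}$, whose $\ip{g_M, x_{M+1}-x_M} = -\alpha_k\|g_M\|^2$ term converts the prefix's $\|g_M\|^2$ into what the target needs). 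Nonnegativity of $A^{(k+1)}$ then reduces to the inductive hypothesis (top-left block), the silver certificate (bottom-right block), and the signs of the handful of glue entries, which follow from $\alpha_k\geq 1$ in \eqref{eq:defining_alpha} and the positivity identities of \Cref{sec:useful_identitites}.

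It then remains to check the scalar identity $f_0 - f_{2M+1} - \tfrac{r_{k+1}-1}{2}\|g_{2M+1}\|^2 = \sum_{i,j=0}^{2^{k+1}-1} A^{(k+1)}_{i,j}Q_{i,j}$: one substitutes the prefix sub-sum (the inductive hypothesis) and the suffix sub-sum (the silver certificate), expands the few glue $Q$'s, and checks that every stray occurrence of $f_M$, $f_{M+1}$, inner products between distinct gradients, and the mismatched $\|g_M\|^2$ cancels, leaving exactly the claimed expression. The crucial relation making this close is the defining equation \eqref{eq:defining_alpha} for $\alpha_k$ --- the balance between the sum form and product form of $r_{k+1}$ --- which forces the cross terms to vanish; the remaining simplifications are the $\alpha_k$--$\beta_k$--$r_k$ identities of \Cref{sec:useful_identitites} (equivalently the explicit recurrence of Lemma~\ref{lem:explicit_alpha_mu}). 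I expect the main obstacle to be precisely this step: pinning down the correct reduction form and constant of the silver sub-certificate and the exact set of glue entries, and then carrying out the routine but lengthy cancellation, the conceptual content being that \eqref{eq:defining_alpha} is exactly the right gluing condition. (Once the proposition is established, \Cref{thm:h_left} follows by adding the single extra term $Q_{2^k-1,\star}\geq 0$, which contributes $f_{2^k-1}-f_\star-\tfrac12\|g_{2^k-1}\|^2$ and turns $r_k-1$ into $r_k$.)
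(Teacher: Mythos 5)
Your skeleton---induction on $k$, top-left block given by the inductive hypothesis for the prefix $\hleft^{(k)}$, bottom-right block a certificate for the suffix $\pi^{(k)}$, plus glue entries, with the defining equation of $\alpha_k$ making things close---is indeed the paper's strategy. But the two ingredients you leave unspecified are exactly where the content lies, and the specific commitments you do make are off. First, the suffix certificate cannot simply be imported from \cite{altschuler2023accelerationPartII}, nor obtained by a palindrome/time-reversal argument: the paper builds a bespoke matrix $B_k$ (\Cref{prop:p_b}) whose aggregate $\sum_{i,j}B_{i,j}Q_{i,j}$ equals $-\frac{1}{\vr}\sum_{i=0}^{2^k-2} h_i\bigl(f_{2^k-1}-f_i-\frac12\norm{g_i}^2-\ip{g_i,x_0-x_i}\bigr)-\frac{1}{2\vr}\norm{x_{2^k-1}-x_0}^2-\frac{\vr^{k-1}(\vr^k-1)}{2}\norm{g_{2^k-1}}^2$; it retains a weighted per-iterate term for \emph{every} suffix index together with a squared-displacement term, and this precise form (which requires its own inductive proof, occupying Section~3) is what allows the gluing to cancel.

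Second, because those per-iterate $f_i$ and $\ip{g_i,\cdot}$ terms must be absorbed, the glue is not ``a short explicit list of entries coupling $x_M$, $x_{M+1}$, and $x_{2M+1}$'': it consists of two full rows of $A^{(k+1)}$, namely row $2^k-1$ carrying weights proportional to $\pi^{(k)}$ against all suffix iterates plus a weight on $Q_{2^k-1,\,2^{k+1}-1}$, and row $2^{k+1}-1$ carrying $\pi^{(k)}$ against the interior suffix iterates plus a weight on $Q_{2^{k+1}-1,\,2^k-1}$. The entry you single out as principal, $Q_{M+1,M}$, does not appear in the certificate at all. Moreover, the nonnegativity of the glue does not follow from $\alpha_k\geq 1$; the delicate entry is $\frac{r_{k+1}}{2\vr^{2k}}\bigl(\vr^k-\frac{2\vr^{2k}}{r_{k+1}}\bigr)$ on $Q_{2^{k+1}-1,\,2^k-1}$, whose sign requires the growth bound $r_{k+1}\geq 4\vr^k$ of \Cref{lem:rk_growth}. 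As written, the proposal defers both the identification of the correct suffix certificate and the determination of the glue to a ``routine but lengthy cancellation'' that it acknowledges it cannot yet carry out, so it is a plan consistent with the paper's proof rather than a proof.
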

\begin{proposition}
	\label{thm:fn_val_decrease}
	Let $k \geq 1$ and invoke \Cref{as:first_order_info} with $h = \hright^{(k)}$. Then, there exist nonnegative multipliers $D^{(k)}\in\R^{2^k\by 2^k}$ and a nonnegative vector $c_k\in\R^{2^k}$ such that
	\begin{align*}
		&\sum_{i=0}^{2^{k} - 1} \frac{(c_k)_i }{\sqrt{r_k}}\left(f_{i} - f_{2^{k} - 1} + \frac{1}{2}\norm{g_{i}}^2 + \ip{g_i, x_0 - x_i}\right) - \frac{1}{2}\norm{\sum_{i=0}^{2^k - 1}(c_k)_i g_i}^2 = \sum_{i,j=0}^{2^k-1} D^{(k)}_{i,j}Q_{i,j} \geq 0.
	\end{align*}
    Furthermore, $\sum_{i=0}^{2^k-1}(c_k)_i=\sqrt{r_k}$.
\end{proposition}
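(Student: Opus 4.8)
The plan is to prove Proposition~\ref{thm:fn_val_decrease} by induction on $k$ using recursive gluing, in the spirit of Altschuler and Parrilo~\cite{altschuler2023accelerationPartI,altschuler2023accelerationPartII}. Concretely, I will give explicit recursive formulas for the certificate pair $(c_k, D^{(k)})$ and then verify, all by induction, (a) the stated identity, (b) nonnegativity of $c_k$ and $D^{(k)}$, and (c) the normalization $\sum_i (c_k)_i = \sqrt{r_k}$. Since $\hright^{(k)}$ is assembled out of the symmetric silver schedule $\pi^{(k)}$ via~\eqref{eq:h_right_def}, the induction also requires a certificate-level refinement of the silver guarantee~\eqref{eq:silver_rate} --- presumably silver analogues of Propositions~\ref{thm:grad_norm_decrease} and~\ref{thm:fn_val_decrease}, which are proven together by a mutual recursion of exactly this type; this symmetry is the certificate-level shadow of the H-duality of~\cite{kim2023timereversed}. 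I will take these silver facts as given and concentrate on the gluing step.

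\emph{Base case.} For $k=1$ we have $\hright^{(1)}=[3/2]$ and $r_1=4$, so the only iterates are $x_0$ and $x_1 = x_0 - \tfrac{3}{2} g_0$. Taking $c_1 = (1,1)$ (so $\sum_i(c_1)_i = 2 = \sqrt{r_1}$) and letting $D^{(1)}$ have the single nonzero entry $D^{(1)}_{0,1} = \tfrac{1}{2}$, a direct expansion using $x_0 - x_1 = \tfrac{3}{2} g_0$ shows that the left-hand side of the Proposition equals $\tfrac{1}{2} Q_{0,1}$, which is nonnegative by Fact~\ref{fact:coercivity}.

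\emph{Inductive step.} Assume the claim at level $k$. Split the $\hright^{(k+1)}=[\pi^{(k)},\alpha_k,\hright^{(k)}]$ trajectory into the silver prefix $x_0,\dots,x_{2^k-1}$ (run with $\pi^{(k)}$), the middle step $x_{2^k-1}\mapsto x_{2^k} = x_{2^k-1}-\alpha_k g_{2^k-1}$, and the suffix $x_{2^k},\dots,x_{2^{k+1}-1}$ (run with $\hright^{(k)}$) --- the suffix, restarted from $x_{2^k}$, being exactly a level-$k$ gradient descent run. I would apply the silver function-value certificate to the prefix and the inductive hypothesis to the suffix (with $x_{2^k}$ in the role of $x_0$), then define $c_{k+1}$ as the concatenation of a positively rescaled silver vector on the prefix indices and a positively rescaled copy of $c_k$ on the suffix indices, and $D^{(k+1)}$ blockwise: the silver certificate on the prefix block, $D^{(k)}$ on the suffix block, and explicitly chosen ``glue'' entries coupling the prefix and middle indices to the suffix (these involve $g_{2^k-1}$ through the $\alpha_k$ step). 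The two rescaling constants and the glue entries are pinned down by demanding that the level-$(k+1)$ identity hold; one of the resulting constraints is precisely $\sum_i(c_{k+1})_i = \sqrt{r_{k+1}}$, which then follows from the two sub-normalizations and the recurrence $r_{k+1} = r_k + 2\alpha_k + 2\sum_{j=0}^{2^k-2}\pi^{(k)}_j$.

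\emph{The main obstacle} is verifying the level-$(k+1)$ identity. After subtracting the silver sub-identity (which clears the monomials supported on the prefix) and the inductive sub-identity (which clears those on the suffix, once $\langle g_i, x_0 - x_i\rangle$ for a suffix index $i$ is rewritten as $\langle g_i, x_{2^k}-x_i\rangle + \langle g_i, x_0 - x_{2^k}\rangle$ using $x_0 - x_{2^k} = \sum_{j=0}^{2^k-2}\pi^{(k)}_j g_j + \alpha_k g_{2^k-1}$), the difference of the two sides is a fixed, $k$-independent list of monomials: the cross terms $\langle g_i, g_j\rangle$ with $i$ a suffix and $j$ a prefix/middle index (from that rewriting, from the cross terms in $-\tfrac{1}{2}\norm{\sum_i(c_{k+1})_i g_i}^2$, and from the glue $Q_{i,j}$'s), the internal function-value gaps $f_{2^k-1}-f_{2^{k+1}-1}$ and $f_{2^k}-f_{2^{k+1}-1}$ appearing because the two sub-certificates reference $f_{2^k-1}$ and $f_{2^k}$ rather than $f_{2^{k+1}-1}$, and the leftover $\norm{g_{2^k-1}}^2$ terms. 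Proving that these all cancel is where I expect the work to concentrate; it should use the defining property of $\alpha_k$ --- that $1 + 2\sum_i(\hright^{(k)})_i$ equals $\prod_i((\hright^{(k)})_i-1)^{-2} = r_k$ --- together with the recurrences for $r_k$ and $\beta_k$ and the silver-ratio identities of \Cref{sec:useful_identitites}, and the recursive form of the certificate is what keeps this computation bounded independently of $k$. Once the identity is in hand, nonnegativity of $c_{k+1}$ and $D^{(k+1)}$ follows from that of the two sub-certificates, positivity of the rescalings, and a direct sign check of the glue entries (using $\alpha_k\ge 1$), closing the induction.
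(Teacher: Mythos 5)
Your proposal matches the paper's proof essentially step for step: the same base case $c_1=(1,1)$, $D^{(1)}_{0,1}=\tfrac12$, the same prefix/middle/suffix decomposition with recursive gluing (silver certificate on the prefix block, $D^{(k)}$ on the suffix block, glue entries in row $2^k-1$ proportional to $c_k$), the same normalization argument via the recurrence for $r_{k+1}$, and the same deferral of the final cross-term cancellation, which the paper reduces to a $5\times 5$ quadratic-form identity checked symbolically. The only framing difference is that the silver ingredient you ``take as given'' is in the paper a single standalone identity for the certificate $B_k$ (\Cref{prop:p_b}), proven by its own induction rather than by a mutual recursion with the two main propositions.
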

For both equalities claimed above, observe that the expression on the left is linear in $f_0,f_1,\dots$ and homogeneously quadratic in $g_0,g_1,\dots$. This holds too for each $Q_{i,j}$: when $i < j$, we have that $x_j = x_i - \sum_{\ell = i}^{j-1} h_\ell g_\ell$ so that 
\begin{align*}
	Q_{i,j} &\coloneqq f_i - f_j - \ip{g_j, x_i - x_j} - \frac{1}{2}\norm{g_i - g_j}^2\\
	&= f_i - f_j - \sum_{\ell=i}^{j-1}h_\ell\ip{g_j, g_\ell} - \frac{1}{2}\norm{g_i - g_j}^2.
\end{align*}
An analogous computation holds when $i > j$.
Thus, it will suffice to prove that the terms depending on $f_i$ match and that the terms depending on $g_i$ match. We will let $\bbracket{\cdot}_f$ denote all terms depending on $f_i$ in a given expression. Similarly, define $\bbracket{\cdot}_g$. For example, if $i<j$, then
\begin{align*}
	\bbracket{Q_{i,j}}_f = f_i - f_j\qquad\text{and}\qquad
	\bbracket{Q_{i,j}}_g = 
	-\sum_{\ell = i}^{j-1}h_\ell\ip{g_j, g_\ell} - \frac{1}{2}\norm{g_i - g_j}^2.
\end{align*}

The full proofs of these two propositions are deferred momentarily: Section~\ref{sec:silver} provides an intermediate construction and helpful lemma regarding silver stepsizes and then Sections~\ref{sec:h_left} and~\ref{sec:h_right} prove Propositions~\ref{thm:grad_norm_decrease} and~\ref{thm:fn_val_decrease} by inductively constructing their needed multipliers. A few symbolically intense simplifications, amounting to showing certain $4\by 4$ or $5\by 5$ matrices are identically zero, are deferred to the associated Mathematica~\cite{mathematica} notebook available at the Github
repository \url{https://github.com/bgrimmer/GDLongSteps2024}. The multipliers proving these key propositions were also present in our previous technical report~\cite{TechnicalLongSteps}. Namely, the certificates $A^{(k)}$ and $D^{(k)}$ correspond to the upper left and lower right blocks of the certificate, denoted $\lambda^{(k)}$, therein.

From these two propositions, Theorems~\ref{thm:h_left} and~\ref{thm:h_right} follow immediately.
\begin{proof}[Proof of Theorem~\ref{thm:h_left}]
	Noting $f_{2^{k}-1} - f_\star -\frac{1}{2}\|g_{2^k-1}\|^2 = Q_{2^k-1,\star} \geq 0$, \Cref{thm:grad_norm_decrease} implies
	$$ f_0 - f_{\star} - \frac{r_k}{2} \norm{g_{2^{k}-1}}^2 = \sum_{i,j=0}^{2^k-1} A^{(k)}_{i,j}Q_{i,j} + Q_{2^k-1,\star} \geq 0.$$
	Reorganizing terms gives the theorem's bound, and \Cref{lem:rk_growth} verifies the asymptotic constant.
\end{proof}
\begin{proof}[Proof of Theorem~\ref{thm:h_right}]
	Noting $f_{\star} - f_i -\ip{g_i,x_\star-x_i} -\frac{1}{2}\|g_{i}\|^2 = Q_{\star,i} \geq 0$, \Cref{thm:fn_val_decrease} implies
	\begin{align*}
		&\sum_{i=0}^{2^{k} - 1} \frac{(c_k)_i }{\sqrt{r_k}}\left(f_{\star} - f_{2^{k} - 1} + \ip{g_i, x_0 - x_\star}\right) - \frac{1}{2}\norm{\sum_{i=0}^{2^k - 1}(c_k)_i g_i}^2\\
		&= \sum_{i,j=0}^{2^k-1} D^{(k)}_{i,j}Q_{i,j} + \sum_{i=0}^{2^k-1} \frac{(c_k)_i }{\sqrt{r_k}} Q_{\star,i} \geq 0.
	\end{align*}
    By \Cref{lem:sum_ck}, we have that $\sum_{i=0}^{2^k-1}(c_k)_i=\sqrt{r_k}$, thus completing the square above and reorganizing terms, we conclude that
	$$ f_{2^{k} - 1} - f_\star \leq \frac{\frac{1}{2}\norm{x_0-x_\star}^2 - \frac{1}{2}\norm{y-x_\star}^2}{r_k}$$
	where $y = x_0 - \sqrt{r_k}\sum_{i=0}^{2^k - 1}(c_k)_i g_i$. Bounding $\norm{y-x_\star}\geq 0$ gives the theorem's bound, and \Cref{lem:rk_growth} verifies the asymptotic constant.
\end{proof}

\section{A Helpful Lemma on the Silver Stepsizes $\pi^{(k)}$}\label{sec:silver}
Define the following sequences of matrices $B^{(k)}\in \R^{2^k\times 2^k}$ inductively.
We will $0$-index both the rows and columns of $B^{(k)}$.
Define
\begin{gather*}
    B_1 \coloneqq \begin{pmatrix} 0 & 1 \\ \vr^{-1} & 0\end{pmatrix},\qquad\text{and}\\
B_{k+1} \coloneqq 
\begin{pmatrix}
  B_{k} &  \\
   & \vr^2 B_{k}
\end{pmatrix}
 + 
    \begin{pmatrix}
      0 \\
      & 0 & \pi^{(k)} & 1 \\
       & & 0 & \\
         &  \vr^{k-1} & \pi^{(k)} &  0
    \end{pmatrix}.
\end{gather*}
Here, in the second definition, the second matrix has support
\begin{align*}
    \set{(2^{k}-1,i):\, i\in [2^{k}-1, 2^{k+1} - 1]}
    \cup
    \set{(2^{k+1}-1, i):\, i\in [2^{k}-2, 2^{k+1}- 2]}.
\end{align*}

\begin{lemma}
    \label{prop:p_b}
Let $k \geq 1$ and invoke \Cref{as:first_order_info} with $h = \pi^{(k)}$. Then, $p_B^{(k)}\coloneqq \sum_{i,j}B_{i,j}Q_{i,j}$ equals
\begin{align*}
    p_B^{(k)}&= -\frac{1}{\vr} \sum_{i=0}^{2^k - 2} h_i \left(f_{2^k - 1} - f_i - \frac{1}{2}\norm{g_i}^2 - \ip{g_i, x_0 - x_i}\right)\\
    &\qquad\qquad - \frac{1}{2\vr}\norm{x_{2^k - 1} - x_0}^2 - \frac{\vr^{k-1}(\vr^k - 1)}{2}\norm{g_{2^k - 1}}^2.
\end{align*}
\end{lemma}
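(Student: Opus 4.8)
The plan is to prove the identity by induction on $k$, driven by the parallel recursions: \eqref{eq:silver_def} builds $\pi^{(k+1)}=[\pi^{(k)},\beta_k,\pi^{(k)}]$ from two copies of $\pi^{(k)}$ joined by a single step of size $\beta_k$, while the defining recursion for $B_{k+1}$ builds it from $\diag(B_k,\vr^2 B_k)$ plus the stated sparse correction, which I will call $C$. Since (as noted just after \Cref{thm:fn_val_decrease}) both sides of the claimed identity become linear in the $f_i$ and homogeneously quadratic in the $g_i$ once one substitutes $x_j=x_i-\sum_{\ell=i}^{j-1}h_\ell g_\ell$ into each $Q_{i,j}$, it suffices to match the $\bbracket{\cdot}_f$ parts of the two sides and the $\bbracket{\cdot}_g$ parts separately. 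The base case $k=1$ is immediate: there $\pi^{(1)}=[\sqrt 2]$, $N=1$, $p_B^{(1)}=Q_{0,1}+\vr^{-1}Q_{1,0}$, and this expands directly to the claimed right-hand side using $\vr^{-1}=\sqrt2-1$.

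For the inductive step, set $m=2^k-1$ and $N=2^{k+1}-1$. Gradient descent with $h=\pi^{(k+1)}$ runs $\pi^{(k)}$ from $x_0$ to $x_m$, takes the long step $x_{m+1}=x_m-\beta_k g_m$, and then runs $\pi^{(k)}$ again from $x_{m+1}$ to $x_N$, with $h_{m+1+j}=h_j$ for $0\le j\le m-1$. Matching the block structure of $B_{k+1}$, decompose
\[ p_B^{(k+1)} \;=\; \underbrace{\sum_{i,j=0}^{m}(B_k)_{i,j}Q_{i,j}}_{p_1} \;+\; \underbrace{\vr^2\sum_{i,j=m+1}^{N}(B_k)_{i-m-1,\,j-m-1}Q_{i,j}}_{p_2} \;+\; \underbrace{\sum_{i,j}C_{i,j}Q_{i,j}}_{p_3}. \]
Here $p_1$ is exactly $p_B^{(k)}$ for the first sub-run, and $p_2$ is $\vr^2$ times $p_B^{(k)}$ for the second sub-run (re-indexed by $i\mapsto i-m-1$, so that $x_0$ and $g_i$ are replaced by $x_{m+1}$ and $g_{m+1+i}$); the induction hypothesis puts both in closed form. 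The correction $C$ contributes only a constant number of ``gluing'' terms --- roughly $\sum_{j}\pi^{(k)}_j(Q_{m,\,m+1+j}+Q_{N,\,m+1+j})+Q_{m,N}+\vr^{k-1}Q_{N,m}$ --- each linking a first-half iterate to a second-half iterate across the middle step.

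The $\bbracket{\cdot}_f$ comparison is routine bookkeeping. Collecting the coefficient of each $f_i$ in $\bbracket{p_1+p_2+p_3}_f$ and comparing with $-\tfrac1\vr\sum_{i=0}^{N-1}h_i(f_N-f_i)$, every equality collapses using only $\vr^2=2\vr+1$, $\vr=1+\sqrt2$, and $\sum_{i=0}^{2^k-2}\pi^{(k)}_i=\vr^k-1$ (itself a one-line induction via $\vr^2=2\vr+1$; cf.\ \Cref{sec:useful_identitites}). The only substantive cancellation is that $p_2$ places coefficient $\vr h_i$ on each second-half $f_i$, which the $-2h_i$ coming from $p_3$ trims to the needed $\vr^{-1}h_i$, precisely because $\vr-2=\vr^{-1}$.

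The $\bbracket{\cdot}_g$ comparison is the heart of the argument, and the step I expect to be the main obstacle. After substituting the gradient-descent recursion, the within-block quadratic content of $\bbracket{p_1}_g$ and $\bbracket{p_2}_g$ already reproduces the first-half and second-half content of the target $\bbracket{p_B^{(k+1)}}_g$ up to (i) a coefficient mismatch, $\vr$ versus $\vr^{-1}$, on the second block and (ii) the cross-block and middle-step terms; both must be supplied by $p_3$ together with the $\vr^2$ rescaling. Reconciling these reduces the entire comparison to a quadratic-form identity among a bounded set of ``effective'' vectors --- the boundary gradients $g_0,g_m,g_{m+1},g_N$ together with the aggregated displacements $x_m-x_0=-\sum_{\ell<m}h_\ell g_\ell$ and $x_N-x_{m+1}=-\sum_{\ell>m}h_\ell g_\ell$ --- i.e.\ to checking that an explicit $4\times4$ (resp.\ $5\times5$) matrix of coefficients in $\vr$ and $\beta_k$ vanishes; I would carry this out symbolically, as in the accompanying Mathematica notebook. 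The structural features that make the identity hold, and are worth keeping in view: the factor $\vr^2$ on the second block, so that $-\tfrac1{2\vr}\norm{x_N-x_{m+1}}^2$ is rescaled to $-\tfrac\vr2\norm{x_N-x_{m+1}}^2$ and fuses with $-\tfrac1{2\vr}\norm{x_m-x_0}^2$ and the $p_3$ cross terms into $-\tfrac1{2\vr}\norm{x_N-x_0}^2$ once $x_{m+1}-x_m=-\beta_k g_m$ is accounted for; the calibration $\beta_k=1+\vr^{k-1}$ of the middle step, which makes the residual $\langle g_m,\cdot\rangle$ and $\beta_k$-weighted $\norm{g_m}^2$ contributions cancel; and the telescoping of the $\norm{g_m}^2$ and $\norm{g_N}^2$ coefficients --- via $\vr^2\cdot\vr^{k-1}(\vr^k-1)=\vr^{k+1}(\vr^k-1)$ and $\vr^2=2\vr+1$ --- into the target's $\tfrac{\vr^k(\vr^{k+1}-1)}{2}\norm{g_N}^2$ and the corresponding $\norm{g_m}^2$ term.
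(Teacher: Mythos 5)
Your proposal is correct and follows essentially the same route as the paper's proof: induction on $k$, the same block decomposition of $B_{k+1}$ into $\diag(B_k,\vr^2 B_k)$ plus the sparse gluing correction, separate matching of the $\bbracket{\cdot}_f$ and $\bbracket{\cdot}_g$ parts using $\vr-2=\vr^{-1}$ and $\sum_i\pi^{(k)}_i=\vr^k-1$, and a final symbolic verification that the residual quadratic form in the boundary quantities vanishes. The only cosmetic difference is that you treat the correction as a single matrix $C$ where the paper splits it into $\Xi$ and $\Delta$.
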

\begin{proof}
    Let $\tilde p_B^{(k)}$ denote the claimed simplification of $p_B^{(k)}$. Note that throughout this proof $h = \pi^{(k)}$ so that $h_i = \beta_{\nu(i+1)}$.

    We will prove $p_B^{(k)}=\tilde p_B^{(k)}$ inductively. For $k = 1$,
    we have that
    $\pi^{(1)} = [\sqrt{2}]$. Using the 
    relation $x_1 = x_0 - \sqrt{2} g_0$, we have that
    \begin{gather*}
        Q_{0,1}=f_0 - f_1 - \sqrt{2}\ip{g_1,g_0} -\frac{1}{2}\norm{g_0-g_1}^2,\qquad\text{and}\\
        Q_{1,0}=f_1 - f_0 + \sqrt{2}\norm{g_0}^2 - \frac{1}{2}\norm{g_0 - g_1}^2.
    \end{gather*}
    Thus,
    \begin{align*}
        p_B^{(1)} &= Q_{0,1} + \vr^{-1}Q_{1,0}\\
        &=\left[f_0 - f_1 - \sqrt{2}\ip{g_1,g_0} -\frac{1}{2}\norm{g_0-g_1}^2\right] + \vr^{-1}\left[f_1 - f_0 +\sqrt{2} \norm{g_0}^2 - \frac{1}{2}\norm{g_0 - g_1}^2\right]\\
        &= - \frac{\sqrt{2}}{\vr}\left(f_1 - f_0 - \frac{1}{2}\norm{g_0}^2\right) - \frac{1}{\vr}\norm{g_0}^2 - \frac{\sqrt{2}}{2}\norm{g_1}^2 = \tilde p_B^{(1)}.
    \end{align*}
    
    Now, inductively suppose $p_B^{(k)} = \tilde p_B^{(k)}$ and consider $p_B^{(k+1)}$.
    By definition, we have
    \begin{align*}
        B_{k+1} = \underbrace{\begin{pmatrix}
        B_{k}\\
        & \vr^2 B_{k}
        \end{pmatrix}}_{\eqqcolon\Theta} +  \underbrace{\begin{pmatrix}
            0 \\
            & 0 & \pi^{(k)} & 0 \\
             & & 0 & \\
               &  0 & \pi^{(k)} &  0
          \end{pmatrix}}_{\eqqcolon\Xi} + \underbrace{\begin{pmatrix}
            0 \\
            & 0 & 0 & 1 \\
             & & 0 & \\
               &  \vr^{k-1} & 0 &  0
          \end{pmatrix}}_{\eqqcolon \Delta}.
    \end{align*}
    


    Let $n = 2^k - 1$.
    The first and last $n$ entries of $\pi^{(k+1)}$ both coincide with $\pi^{(k)}$, thus by induction:
    \begin{align*}
        &\sum_{i,j} \Theta_{i,j}\bbracket{Q_{i,j}}_f  + \sum_{i,j} \Xi_{i,j}\bbracket{Q_{i,j}}_f\\
        &\qquad= \frac{1}{\vr}\sum_{i=0}^{n-1}h_i(f_i - f_{n}) + \vr \sum_{i=n+1}^{2n}h_i(f_i - f_{2n+1}) + \sum_{i=n+1}^{2n} h_i \left(f_{n} + f_{2n+1} - 2f_i\right)
        \\
        &\qquad= \bbracket{\tilde p_B^{(k+1)}}_f + \left((\vr^k -1)(1-\vr^{-1}) - \beta_k\vr^{-1}\right)f_{n} + \left((\vr^k -1)(1-\vr) + \vr^{-1}(\vr^{k+1} - 1)\right)f_{2n+1}.
    \end{align*}
    Here, we have used the fact that
    $\vr-2 = 1/\vr$ and $\sum_i \pi^{(k)}_i = \vr^k - 1$.
    Next,
    \begin{align*}
        \sum_{i,j}\Delta_{i,j}\bbracket{Q_{i,j}}_f &= \bbracket{Q_{n, 2n+1}}_f + \vr^{k-1}\bbracket{Q_{2n+1, n}}_f\\
        &= (\vr^{k-1} - 1)(f_{2n+1} - f_{n}).
    \end{align*}
    Adding up these two identities shows that
        $\bbracket{p_B^{(k+1)}}_f = \bbracket{\tilde p_B^{(k+1)}}_f$ (see {\color{red} \texttt{Mathematica Proof 3.1}}).

    We repeat this with $\bbracket{\cdot}_g$. Again, by induction:
    \begin{align*}
        &\sum_{i,j} \Theta_{i,j}\bbracket{Q_{i,j}}_g\\
        &\qquad= -\frac{1}{\vr} \sum_{i=0}^{n - 1} h_i \left(- \frac{1}{2}\norm{g_i}^2 - \ip{g_i, x_0 - x_i}\right) - \frac{1}{2\vr}\norm{x_n - x_0}^2 - \frac{\vr^{k-1}(\vr^k - 1)}{2}\norm{g_{n}}^2\\
        &\qquad\qquad - \vr \sum_{i=n+1}^{2n} h_i \left(- \frac{1}{2}\norm{g_i}^2 - \ip{g_i, x_{n+1} - x_i}\right) - \frac{\vr}{2}\norm{x_{2n+1} - x_{n+1}}^2 - \frac{\vr^{k+1}(\vr^k - 1)}{2}\norm{g_{2n + 1}}^2\\
        &\qquad= \bbracket{\tilde p_B^{(k+1)}} + 2 \sum_{i=n+1}^{2n} h_i \left(\frac{1}{2}\norm{g_i}^2 + \ip{g_i, x_0 - x_i}\right) + \frac{1}{2\vr}\norm{x_{2n+1} - x_0}^2 + \frac{\vr^k(\vr^{k+1}-1)}{2}\norm{g_{2n + 1}}^2
        \\
        &\qquad\qquad - \frac{1}{\vr}h_n \left(\frac{1}{2}\norm{g_n}^2 + \ip{g_n, x_0 - x_n}\right)
        - \frac{1}{2\vr}\norm{x_n - x_0}^2 - \frac{\vr^{k-1}(\vr^k - 1)}{2}\norm{g_{n}}^2\\
        &\qquad\qquad
        + \vr \ip{x_{n+1} - x_{2n + 1}, x_{n+1}-x_0}
        - \frac{\vr}{2}\norm{x_{2n+1} - x_{n+1}}^2 - \frac{\vr^{k+1}(\vr^k - 1)}{2}\norm{g_{2n + 1}}^2.
    \end{align*}
    Here, we have used the fact that $\rho - \rho^{-1} = 2$.
    Next, we compute:
    \begin{align*}
        &\sum_{i,j} \Xi_{i,j}\bbracket{Q_{i,j}}_g \\
        &\qquad=-\sum_{i=n+1}^{2n}h_i\left(\ip{g_i, x_{n} + x_{2n+1} - 2x_i} + \frac{1}{2}\norm{g_i - g_{n}}^2 + \frac{1}{2}\norm{g_i - g_{2n+1}}^2\right)\\
        &\qquad=-\sum_{i=n+1}^{2n}h_i\ip{g_i, x_{n} +  x_{2n+1} - 2x_0}  -2\sum_{i=n+1}^{2n}h_i\ip{g_i, x_0-x_i}\\
        &\qquad\qquad -\sum_{i=n+1}^{2n}h_i \norm{g_i}^2 -\frac{1}{2}\sum_{i=n+1}^{2n}h_i \norm{g_{n}}^2 -\frac{1}{2}\sum_{i=n+1}^{2n}h_i \norm{g_{2n+1}}^2\\
        &\qquad\qquad + \sum_{i=n+1}^{2n}h_i \ip{g_i, g_{n } + g_{2n+1}}\\
        &\qquad= 
        -2 \sum_{i=n+1}^{2n}h_i\left(\frac{1}{2}\norm{g_i}^2 + \ip{g_i, x_0 - x_i} \right) + 
        \ip{x_{2n+1} - x_{n+1}, x_n + x_{2n+1} - 2x_0} \\
        &\qquad\qquad - \frac{\vr^k - 1}{2}\norm{g_n}^2 - \frac{\vr^k - 1}{2}\norm{g_{2n+1}}^2 + \ip{x_{n+1} - x_{2n+1}, g_n + g_{2n+1}}.
    \end{align*}
    Here, we have used the facts that $\sum_{i=n+1}^{2n}h_i g_i =x_{n+1} - x_{2n+1}$ and that the last $n$ entries of $h$ are $\pi^{(k)}$.
    Next, 
    \begin{align*}
        \sum_{i,j}\Delta_{i,j}\bbracket{Q_{i,j}}_g
        &=\ip{g_{2n+1}, x_{2n+1}- x_{n}} - \frac{1}{2} \norm{g_{2n+1}- g_{n}}^2\\
        &\qquad - \vr^{k-1} \ip{g_{n}, x_{2n+1}- x_{n}} - \frac{\vr^{k-1}}{2} \norm{g_{2n+1}- g_{n}}^2\\
        &=\ip{ x_{2n+1} - x_n, g_{2n+1} - \rho^{k-1}g_{n}} - \frac{\beta_k}{2} \norm{g_{2n+1}- g_{n}}^2.
    \end{align*}

    We may now sum up these three identities to see that
    \begin{align*}
        \bbracket{p_B^{(k+1)}}_g &= \bbracket{\tilde p_B^{(k+1)}}_g + (\textup{a quadratic form in }x_n-x_0,\, x_{2n+1}-x_0,\, g_n,\, g_{2n+1}).
    \end{align*}
    This quadratic form is determined by a $4\by 4$ matrix which is identically zero, verified in {\color{red} \texttt{Mathematica Proof 3.2}}.
\end{proof}

\section{Proof of \Cref{thm:grad_norm_decrease} (An Analysis of $\hleft^{(k)}$)} \label{sec:h_left}
Define the following sequences of matrices $A^{(k)}\in \R^{2^k\times 2^k}$ inductively.
Set
\begin{gather*}
    A_1 \coloneqq \begin{pmatrix} 0 & 2 \\ 1 & 0\end{pmatrix}\qquad\text{and}\\
    A_{k+1} \coloneqq \begin{pmatrix}
        A_{k} &  \\
         & \frac{r_{k+1}}{\vr^{2k-1}} B_{k}
    \end{pmatrix}
    + 
    \frac{r_{k+1}}{2\vr^{2k}}
    \begin{pmatrix}
      0 \\
       & 0 & \pi^{(k)} & 1 
      \\
      & & 0 & \\
     &\vr^{k} - \frac{2\vr^{2k}}{r_{k+1}} &  \pi^{(k)}&  0
    \end{pmatrix}.
\end{gather*}
Here, in the second definition, the second matrix has support
\begin{align*}
    \set{(2^{k}-1,i):\, i\in [2^{k}-1, 2^{k+1} - 1]}
    \cup
    \set{(2^{k+1}-1, i):\, i\in [2^{k}-2, 2^{k+1}- 2]}.
\end{align*}


By \Cref{lem:rk_growth}, we have that $r_{k+1} \geq 4\vr^{k}$ so that $A^{(k)}$ is nonnegative for all $k\geq 1$.

Define
    $p_A^{(k)} \coloneqq \sum_{i,j=0}^{2^k-1} A^{(k)}_{i,j}Q_{i,j}$,
and let $\tilde p_A^{(k)}$ denote the claimed simplification of $p_A^{(k)}$ in \Cref{thm:grad_norm_decrease}. Note that throughout this proof $h= \hleft^{(k)}$.

We prove this inductively. For $k = 1$,
we have that
$\hleft^{(1)} = [3/2]$ and $r_1 = 4$. Using the 
relation $x_1 = x_0 - \frac{3}{2} g_0$, we have that
\begin{gather*}
    Q_{0,1}=f_0 - f_1 - \frac{3}{2}\ip{g_1,g_0} -\frac{1}{2}\norm{g_0-g_1}^2\geq 0\\
    Q_{1,0}=f_1 - f_0 + \frac{3}{2}\norm{g_0}^2 - \frac{1}{2}\norm{g_0 - g_1}^2 \geq 0.
\end{gather*}
Thus,
\begin{align*}
    p_A^{(1)} &= 2Q_{0,1} + Q_{1,0}\\
    &=2\left[f_0 - f_1 - \frac{3}{2}\ip{g_0, g_1} - \frac{1}{2}\norm{g_0 - g_1}^2\right] + \left[f_1 - f_0 +\frac{3}{2} \norm{g_0}^2 - \frac{1}{2}\norm{g_0 - g_1}^2\right]\\
    &= f_0 - f_1 - \frac{3}{2}\norm{g_1}^2 = \tilde p_A^{(1)}.
\end{align*}

Now, suppose $p_A^{(k)}=\tilde p_A^{(k)}$ and consider $p_A^{(k+1)}$. 
For notational simplicity, let $\gamma \coloneqq \frac{r_{k+1}}{\vr^{2k-1}}$.
By definition, we have
\begin{align*}
    A_{k+1}\coloneqq \underbrace{\begin{pmatrix}
    A_k&\\& \gamma B_k
    \end{pmatrix}}_{\eqqcolon\Theta} + \underbrace{\frac{\gamma}{2\rho}\begin{pmatrix}
    0 &\\& 0 & \pi^{(k)} & 0\\
    & & 0 & \\ 
    & 0 & \pi^{(k)} & 0
    \end{pmatrix}}_{\eqqcolon\Xi} + \underbrace{\begin{pmatrix}
        0 &\\& 0 & 0 & \frac{\gamma}{2\vr}\\
        & & 0 & \\ 
        & \frac{\gamma\vr^{k-1}}{2}- 1& 0 & 0
        \end{pmatrix}}_{\eqqcolon\Delta}
\end{align*}

Let $n=2^k - 1$.
Note that the first $n$ entries of $\hleft^{(k+1)}$ coincide with $\hleft^{(k)}$ and last $n$ entries of $\hleft^{(k+1)}$ coincide with $\pi^{(k)}$. Thus, by induction and \Cref{prop:p_b}, it holds that
\begin{gather*}
\begin{aligned}
    \sum_{i,j} \Theta_{i,j}\bbracket{Q_{i,j}}_f &= f_0 - f_n  + \frac{\gamma}{\vr}\sum_{i=n+1}^{2n} h_i(f_i - f_{2n + 1})\\
    &= f_0 - f_n  + \frac{\gamma}{\vr}\sum_{i=n+1}^{2n} h_if_i - \frac{\gamma(\vr^k - 1)}{\vr}f_{2n + 1},
\end{aligned}\\
\begin{aligned}
    \sum_{i,j} \Xi_{i,j}\bbracket{Q_{i,j}}_f &= \frac{\gamma}{2\vr}\left(\sum_{i=n+1}^{2n}h_i(f_n +f_{2n+1}- 2f_i)\right)\\
    &= \frac{\gamma (\vr^k - 1)}{2\vr}(f_n + f_{2n+1})- \frac{\gamma}{\vr}\sum_{i=n+1}^{2n}h_if_i,\quad\text{and}
\end{aligned}\\
\begin{aligned}
    \sum_{i,j} \Delta_{i,j}\bbracket{Q_{i,j}}_f &= \left(1 - \frac{\gamma (\vr^{k}-1)}{2\vr}\right)(f_n - f_{2n+1}).
\end{aligned}
\end{gather*}
Summing up these identities gives
\begin{align*}
    \bbracket{p_B^{(k+1)}}_f &= f_0 - f_{2n+1} = \bbracket{\tilde p_B^{(k+1)}}_f.
\end{align*}

We repeat this with $\bbracket{\cdot}_g$. First,
\begin{align*}
    &\sum_{i,j} \Theta_{i,j}\bbracket{Q_{i,j}}_g\\
    &\qquad= - \left(\frac{r_k - 1}{2}\right) \norm{g_{n}}^2 + \frac{\gamma}{2\vr}\sum_{i=n+1}^{2n} h_i \left(\norm{g_i}^2 + 2\ip{g_i, x_{n+1} - x_i}\right)\\
    &\qquad\qquad - \frac{\gamma}{2\vr}\norm{x_{2n+1} - x_{n+1}}^2 - \frac{\gamma\vr^{k-1}(\vr^k - 1)}{2}\norm{g_{2n+1}}^2\\
    &\qquad= \bbracket{\tilde p_A^{(k+1)}}+ \frac{\gamma}{2\vr}\sum_{i=n+1}^{2n} h_i \left(\norm{g_i}^2 +2\ip{g_i, x_0 - x_i}\right) - \left(\frac{r_k - 1}{2}\right) \norm{g_{n}}^2 +  \left(\frac{r_{k+1}-1}{2}\right) \norm{g_{2n+1}}^2 \\
    &\qquad\qquad  + \frac{\gamma}{\vr}\ip{x_{n+1}-x_{2n+1}, x_{n+1} - x_0} - \frac{\gamma}{2\vr}\norm{x_{2n+1} - x_{n+1}}^2 - \frac{\gamma\vr^{k-1}(\vr^k - 1)}{2}\norm{g_{2n+1}}^2.
\end{align*}
Next, we compute $\sum_{i,j} \Xi_{i,j}\bbracket{Q_{i,j}}_g$.
This quantity was previously computed (up to scaling) in the proof of \Cref{prop:p_b}. Note that in that proof, we only used the fact that the last $n$ entries of $h$ coincide with $\pi^{(k)}$. This also holds for $h = \hleft^{(k)}$. Thus, we have
\begin{align*}
    \sum_{i,j} \Xi_{i,j}\bbracket{Q_{i,j}}_g &= \frac{\gamma}{2\vr}\bigg(-2 \sum_{i=n+1}^{2n}h_i\left(\frac{1}{2}\norm{g_i}^2 + \ip{g_i, x_0 - x_i} \right) + 
    \ip{x_{2n+1} - x_{n+1}, x_n + x_{2n+1} - 2x_0} \\
    &\qquad\qquad\qquad - \frac{\vr^k - 1}{2}\norm{g_n}^2 - \frac{\vr^k - 1}{2}\norm{g_{2n+1}}^2 + \ip{x_{n+1} - x_{2n+1}, g_n + g_{2n+1}}\bigg).
\end{align*}
Finally, we have
\begin{align*}
    \sum_{i,j} \Delta_{i,j}\bbracket{Q_{i,j}}_g &= \frac{\gamma}{2\vr}\left(\ip{g_{2n+1}, x_{2n+1}- x_{n}} - \frac{1}{2} \norm{g_{2n+1}- g_{n}}^2\right)\\
    &\qquad\qquad - \left(\frac{\gamma \vr^k}{2\vr} - 1\right)\left(\ip{g_{n}, x_{2n+1}- x_{n}} + \frac{1}{2}\norm{g_{2n+1}- g_{n}}^2\right).
\end{align*}
Summing up the three identities above, shows that
\begin{align*}
    \bbracket{p_A^{(k+1)}}_g &=  \bbracket{\tilde p_A^{(k+1)}}_g + (\textup{a quadratic form in }x_n,\, x_{2n+1},\, g_n,\, g_{2n+1}).
\end{align*}
This quadratic form is determined by a $4\by 4$ matrix which is identically zero. This is verified in {\color{red} \texttt{Mathematica Proof 4.1}}, using the additional identities in Lemma~\ref{lem:r-formulas}.

\section{Proof of \Cref{thm:fn_val_decrease} (An Analysis of $\hright^{(k)}$)} \label{sec:h_right}
Define $c_k\in \R^{2^k}$ as follows
\begin{gather*}
    c_1 = \begin{pmatrix}
        1 & 1
    \end{pmatrix}\qquad\text{and}\\
    c_{k+1} = \begin{pmatrix} 
        \frac{1}{\sqrt{r_{k+1}}} \pi^{(k)} & \frac{\beta_{k+1}}{\sqrt{r_{k+1}}} & c_{k}
        \end{pmatrix}.
    \end{gather*}
    We will $0$-index $c_k$.
    Next, define $D_k\in\R^{2^k \by 2^k}$ as follows
    \begin{gather*}
    D_1 =\begin{pmatrix} 0 & 1/2 \\ 0 &  0\end{pmatrix}\qquad\text{and}\\
    D_{k+1} = 
\begin{pmatrix}
  \frac{\vr}{r_{k+1}} B_{k} & \\
  0 & D_{k}
\end{pmatrix} + \begin{pmatrix}
0 \\
& 0 & \left(\frac{1}{\sqrt{r_{k}}} - \frac{1}{\sqrt{r_{k+1}}}\right) c_{k}\\
& & 0
\end{pmatrix}.
\end{gather*}
Again, we will $0$-index both the rows and columns of $D_k$.
Here, the second matrix has support
\begin{gather*}
    \set{(2^{k} - 1, j):\, j\in[2^{k} , 2^{k+1} - 1]}.
\end{gather*}

Define,
    $p_D^{(k)}\coloneqq \sum_{i,j} (D_k)_{i,j}Q_{i,j}$.
and let $\tilde p_D^{(k)}$ denote the claimed simplification of $p_D^{(k)}$ in \Cref{thm:fn_val_decrease}. Note that throughout this proof $h = \hright^{(k)}$.

We will prove $p_D^{(k)} = \tilde p_D^{(k)}$ inductively. For $k = 1$, we have that $\hright^{(1)} = [3/2]$ and $r_1 = 4$. We compute
\begin{align*}
    p_D^{(1)} &= \frac{1}{2}Q_{0,1}\\
    &= \frac{1}{2}\left(f_0 - f_1 - \frac{3}{2}\ip{g_1, g_0} - \frac{1}{2}\norm{g_0 - g_1}^2\right)\\
    &= \frac{1}{2}\left(f_0 - f_1 + \frac{1}{2}\norm{g_0}^2\right) + \frac{1}{2}\left(\frac{1}{2}\norm{g_1}^2 + \frac{3}{2}\ip{g_1, g_0}\right) - \frac{1}{2}\norm{g_0 + g_1}^2 = \tilde p_D^{(1)}.
\end{align*}

Now, suppose $p_D^{(k)} = \tilde p_D^{(k)}$ and consider $p_D^{(k+1)}$. By definition, we have
\begin{align*}
    D_{k+1} = \underbrace{
\begin{pmatrix}
  \frac{\vr}{r_{k+1}} B_{k} & \\
  0 & D_{k}
\end{pmatrix}}_{\eqqcolon \Theta} + \underbrace{\begin{pmatrix}
0 \\
& 0 & \left(\frac{1}{\sqrt{r_k}} - \frac{1}{\sqrt{r_{k+1}}}\right) c_{k}\\
& & 0
\end{pmatrix}}_{\eqqcolon \Delta}.
\end{align*}
This process of recursively constructing such certificates was termed `recursive gluing' in \cite{altschuler2023accelerationPartI}.

Let $n = 2^k - 1$.
Note that the first $n$ elements of $h$ coincide with $\pi^{(k)}$ and the last $n$ elements of $h$ coincide with $\hright^{(k)}$. Thus, by \Cref{prop:p_b} and induction, we have that
\begin{align*}
    \sum_{i,j}\Theta_{i,j} \bbracket{Q_{i,j}}_f &= \frac{1}{r_{k+1}}\sum_{i=0}^{n-1}h_i (f_i
    -f_{n}) + \sum_{i=n+1}^{2n + 1} \frac{(c_k)_{i - 1- n} }{\sqrt{r_k}}\left(f_{i} - f_{2n+1}\right)\\
    &= \sum_{i=0}^{n-1}\frac{(c_{k+1})_i}{\sqrt{r_{k+1}}} (f_i
    -f_{n}) + \sum_{i=n+1}^{2n+1} \frac{(c_{k+1})_i }{\sqrt{r_k}}\left(f_{i} - f_{2n+1}\right)\\
    &= \bbracket{\tilde p_D^{(k+1)}} - \left(\frac{2\vr^k}{r_{k+1}}\right)f_n + \left(\frac{1}{\sqrt{r_k}}- \frac{1}{\sqrt{r_{k+1}}}\right)\sum_{i=n+1}^{2n+1}(c_{k+1})_i f_i.
\end{align*}
Here, we have used that the sum of $c_{k}$ is $\sqrt{r_k}$ (see Lemma~\ref{lem:sum_ck}) and that the last $2^k$ entries of $c_{k+1}$ are exactly the entries of $c_k$.
Next,
\begin{align*}
    \sum_{i,j}\Delta_{i,j} \bbracket{Q_{i,j}}_f&= 
    \left(\frac{1}{\sqrt{r_k}}- \frac{1}{\sqrt{r_{k+1}}}\right)\sum_{i=n+1}^{2n+1} (c_{k+1})_i\left(f_n - f_i\right)\\
    &= \left(1 - \frac{\sqrt{r_k}}{\sqrt{r_{k+1}}}\right)f_n  - \sum_{i=n+1}^{2n+1} \left(\frac{1}{\sqrt{r_k}}- \frac{1}{\sqrt{r_{k+1}}}\right)(c_{k+1})_i f_i.
\end{align*}

Summing up these identities shows
\begin{align*}
    \bbracket{p_D^{(k+1)}}_f &= \bbracket{\tilde p_D^{(k+1)}}_f  + \left(1 - \frac{\sqrt{r_k}}{\sqrt{r_{k+1} }} - \frac{2\vr^k}{r_{k+1}}\right)f_{n}.
\end{align*}
Note that the sum of all coefficient in the expressions $\bbracket{p_D^{(k+1)}}_f$ and $\bbracket{\tilde p_D^{(k+1)}}_f$ must be zero. Thus, the error term is identically zero.
Next, define $y_0,\dots, y_{2n+2}$ as $y_j\coloneqq x_0 - \sum_{i=0}^{j-1} (c_{k+1})_i g_i$.
Then,
\begin{align*}
    \sum_{i,j}\Theta_{i,j} \bbracket{Q_{i,j}}_g &= \frac{1}{r_{k+1}}\sum_{i=0}^{n-1} h_i\left(\frac{1}{2}\norm{g_i}^2 + \ip{g_i, x_0 - x_i}\right) - \frac{1}{2r_{k+1}}\norm{x_{n}- x_0}^2 - \frac{\vr^k(\vr^k - 1)}{2r_{k+1}}\norm{g_n}^2\\
    &\qquad + \sum_{i=n+1}^{2n +1} \frac{(c_{k+1})_i}{\sqrt{r_k}}\left(\frac{1}{2}\norm{g_i}^2 + \ip{g_i, x_{n+1} - x_i}\right) - \frac{1}{2}\norm{y_{n+1} - y_{2n+2}}^2\\
    &= \bbracket{\tilde p_D^{(k+1)}}_g + \frac{1}{2}\norm{y_{2n+2} - x_0}^2
    - \frac{\beta_{k+1}}{r_{k+1}}\left(\frac{1}{2}\norm{g_n}^2 + \ip{g_n, x_0-x_n}\right)\\
    &\qquad
    - \frac{1}{2r_{k+1}}\norm{x_{n}- x_0}^2  - \frac{\vr^k(\vr^k - 1)}{2r_{k+1}}\norm{g_n}^2\\
    &\qquad + \left(\frac{1}{\sqrt{r_k}} - \frac{1}{\sqrt{r_{k+1}}}\right)\sum_{i=n+1}^{2n +1} (c_{k+1})_i\left(\frac{1}{2}\norm{g_i}^2 + \ip{g_i, x_0 - x_i}\right) \\
    &\qquad + \frac{1}{\sqrt{r_k}}\ip{y_{n+1} - y_{2n+2}, x_{n+1} - x_0} - \frac{1}{2}\norm{y_{n+1}-y_{2n+2}}^2.
\end{align*}
And,
\begin{align*}
    \sum_{i,j}\Delta_{i,j} \bbracket{Q_{i,j}}_g &= \left(\frac{1}{\sqrt{r_k}}- \frac{1}{\sqrt{r_{k+1}}}\right)\sum_{i=n+1}^{2n+1} (c_{k+1})_i \left(- \ip{g_i, x_n - x_i}- \frac{1}{2}\norm{g_i - g_n}^2\right)\\
    &= \left(\frac{1}{\sqrt{r_k}}- \frac{1}{\sqrt{r_{k+1}}}\right)\sum_{i=n+1}^{2n+1} (c_{k+1})_i \left(- \ip{g_i, x_0 - x_i} - \frac{1}{2}\norm{g_i}^2\right) \\
    &\qquad + \left(\frac{1}{\sqrt{r_k}}- \frac{1}{\sqrt{r_{k+1}}}\right)\ip{y_{2n+2}-y_{n+1}, x_n - x_0 - g_n} -  \left(1- \frac{\sqrt{r_k}}{\sqrt{r_{k+1}}}\right)\frac{1}{2}\norm{g_n}^2.
\end{align*}

Summing up these two quantities gives
\begin{align*}
    \bbracket{p_D^{(k+1)}}_g &= \bbracket{\tilde p_D^{(k+1)}}_g + \frac{1}{2}\norm{y_{2n+2} - x_0}^2
    - \frac{\beta_{k+1}}{r_{k+1}}\left(\frac{1}{2}\norm{g_n}^2 + \ip{g_n, x_0-x_n}\right)\\
    &\qquad
    - \frac{1}{2r_{k+1}}\norm{x_{n}- x_0}^2  - \frac{\vr^k(\vr^k - 1)}{2r_{k+1}}\norm{g_n}^2\\
    &\qquad + \frac{1}{\sqrt{r_k}}\ip{y_{n+1} - y_{2n+2}, x_{n+1} - x_0} - \frac{1}{2}\norm{y_{n+1}-y_{2n+2}}^2\\
    &\qquad + \left(\frac{1}{\sqrt{r_k}}- \frac{1}{\sqrt{r_{k+1}}}\right)\ip{y_{2n+2}-y_{n+1}, x_n - x_0 - g_n} -  \left(1- \frac{\sqrt{r_k}}{\sqrt{r_{k+1}}}\right)\frac{1}{2}\norm{g_n}^2.
\end{align*}
Note that $y_0 = x_0$ and that $y_{n} - y_0 = -\sum_{i=0}^{n-1} (c_{k+1})_i g_i = \frac{1}{\sqrt{r_{k+1}}}(x_{n}- x_0)$.
Thus, the error term is a quadratic form in the quantities: $x_n- x_0$,
$x_{2n+1} - x_0$,
$g_n$,
$g_{2n +1}$,
$y_{2n+2} - y_{n+1}$. The $5\by 5$ matrix determining this quadratic form is verified to be identically zero in {\color{red} \texttt{Mathematica Proof 5.1}}.

\paragraph{Acknowledgements.} Benjamin Grimmer's work was supported in part by the Air Force Office of Scientific Research under award number FA9550-23-1-0531.

\bibliographystyle{unsrt}
\bibliography{bibliography}

\appendix
\section{Useful Identities}
\label{sec:useful_identitites}
The following formulas relate to the silver stepsize schedule $\pi^{(k)}$.
\begin{lemma}
	\label{lem:silver_identities}
	For $k\geq 1$, we have that
	\begin{gather*}
		\sum_{i=0}^{2^k - 2}\pi^{(k)}_i = \vr^k - 1,\qquad\text{and}\qquad
		\prod_{i=0}^{2^k - 2} \left(\pi^{(k)}_i - 1\right)^2 = \vr^{-2k}.
	\end{gather*}
\end{lemma}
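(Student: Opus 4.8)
The plan is to prove both identities simultaneously by induction on $k$, exploiting the recursive structure $\pi^{(k+1)} = [\pi^{(k)}, \beta_k, \pi^{(k)}]$ from~\eqref{eq:silver_def}. For the base case $k=1$, we have $\pi^{(1)} = [\sqrt 2]$, so $\sum_{i=0}^{0}\pi^{(1)}_i = \sqrt 2 = \vr - 1$ (since $\vr = 1+\sqrt 2$), and $(\pi^{(1)}_0 - 1)^2 = (\sqrt 2 - 1)^2 = 3 - 2\sqrt 2 = \vr^{-2}$ (since $\vr^{-1} = \sqrt 2 - 1$). Both claimed formulas hold.

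For the inductive step, assume both identities hold for $k$. The length of $\pi^{(k+1)}$ is $2^{k+1}-1$, and its entries split as two copies of $\pi^{(k)}$ separated by the single entry $\beta_k = 1 + \vr^{k-1}$. For the sum,
\begin{align*}
\sum_{i=0}^{2^{k+1}-2}\pi^{(k+1)}_i = 2\sum_{i=0}^{2^k-2}\pi^{(k)}_i + \beta_k = 2(\vr^k - 1) + 1 + \vr^{k-1} = 2\vr^k + \vr^{k-1} - 1.
\end{align*}
It remains to check $2\vr^k + \vr^{k-1} = \vr^{k+1}$, i.e. $2\vr + 1 = \vr^2$; this is exactly the defining relation of the silver ratio $\vr = 1+\sqrt 2$ (equivalently $\vr^2 - 2\vr - 1 = 0$), which also underlies the identity $\vr - \vr^{-1} = 2$ used throughout the paper. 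For the product,
\begin{align*}
\prod_{i=0}^{2^{k+1}-2}\left(\pi^{(k+1)}_i - 1\right)^2 = \left(\prod_{i=0}^{2^k-2}(\pi^{(k)}_i - 1)^2\right)^2 (\beta_k - 1)^2 = \vr^{-4k}\cdot(\vr^{k-1})^2 = \vr^{-4k + 2k - 2} = \vr^{-2(k+1)},
\end{align*}
using $\beta_k - 1 = \vr^{k-1}$ and the inductive hypothesis for the product. This completes the induction.

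I do not anticipate a genuine obstacle here: the only non-formal ingredient is the quadratic relation $\vr^2 = 2\vr + 1$ satisfied by the silver ratio, and everything else is bookkeeping with the recursive definition. One could alternatively give a direct (non-inductive) proof using the closed form $\pi^{(k)}_i = \beta_{\nu(i+1)}$ noted after~\eqref{eq:silver_def}, counting how many indices $i\in\{0,\dots,2^k-2\}$ satisfy $\nu(i+1) = m$ for each $m$ (namely $2^{k-1-m}$ of them for $0\le m\le k-1$) and summing the resulting geometric-type series, but the inductive argument is cleaner and mirrors the structure used elsewhere in the paper.
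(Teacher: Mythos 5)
Your proof is correct and follows essentially the same route as the paper's: a base case at $k=1$ using $\vr^{-1}=\sqrt 2 -1$, followed by induction on the recursion $\pi^{(k+1)}=[\pi^{(k)},\beta_k,\pi^{(k)}]$ with the silver-ratio relation $\vr^2 = 1+2\vr$ closing the sum identity and $\beta_k - 1 = \vr^{k-1}$ closing the product identity. No issues.
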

\begin{proof}
	For $k = 1$, we have that $\pi^{(1)} = [\sqrt{2}]$. 
	The first identity is easily verified. The second identity can be seen by noting that $\sqrt{2} - 1 = \vr^{-1}$.
	For $k>1$, both of these results follow by induction, noting that $1 + 2\vr = \vr^2$,
	\begin{align*}
		\sum_{i=0}^{2^{k+1} - 2}\pi^{(k+1)}_i 
		&= (1 + 2\vr)\vr^{k-1}-1 = \vr^{k+1} - 1.\\
		\prod_{i=0}^{2^{k+1} - 2} (\pi^{(k+1)}_i - 1)^2 &= \vr^{-4k}\vr^{2(k-1)} = \vr^{-2(k+1)}.\qedhere
	\end{align*}
\end{proof}

The remainder of this appendix presents useful identities and inequalities regarding $\alpha_k,\,\beta_k,\,r_k$.
\begin{lemma}
	\label{lem:explicit_alpha_mu}
	Let $k \geq 1$. It holds that $\alpha_k$ is the unique root larger than $1$ of
	\begin{align*}
		x\mapsto r_k(x - 1 - \vr^k) + 2(x - 1)^2.
	\end{align*}
	This gives the explicit formulas:
	\begin{gather*}
		\alpha_{k} = 1 + \frac{\sqrt{r_k(r_k + 8 \vr^k)}-r_k}{4},\qquad\text{and}\\
		r_{k+1} = \frac{r_k+4\vr^k+\sqrt{r_k(r_k+ 8\vr^k)}}{2}.
	\end{gather*}
\end{lemma}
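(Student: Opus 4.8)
The plan is to reduce the defining condition \eqref{eq:defining_alpha} to a polynomial identity in $\alpha_k$ by evaluating the relevant sum and product over $\hright^{(k+1)} = [\pi^{(k)},\alpha_k,\hright^{(k)}]$ in closed form, and then to exploit a fortuitous factorization. First I would use \eqref{eq:h_right_def} to split both the sum and the product appearing in \eqref{eq:defining_alpha} into the $\pi^{(k)}$-block, the single entry $\alpha_k$, and the $\hright^{(k)}$-block. For the sums, \Cref{lem:silver_identities} gives $\sum_i \pi^{(k)}_i = \vr^k - 1$, while the definition of $r_k$ gives $\sum_i (\hright^{(k)})_i = (r_k-1)/2$; hence the left-hand side of \eqref{eq:defining_alpha} equals $r_k + 2\vr^k + 2(\alpha_k - 1)$ (and, by definition, this common value is exactly $r_{k+1}$). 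For the products, \Cref{lem:silver_identities} gives $\prod_i(\pi^{(k)}_i - 1)^{-2} = \vr^{2k}$ and the definition of $r_k$ gives $\prod_i((\hright^{(k)})_i - 1)^{-2} = r_k$, so the right-hand side equals $\vr^{2k} r_k/(\alpha_k - 1)^2$. Writing $t = \alpha_k - 1$, equation \eqref{eq:defining_alpha} becomes $(r_k + 2\vr^k + 2t)\,t^2 = \vr^{2k} r_k$, i.e.\ the cubic $2t^3 + (r_k + 2\vr^k)t^2 - \vr^{2k} r_k = 0$.

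The key step is then the factorization
\[
2t^{3} + (r_k + 2\vr^{k})t^{2} - \vr^{2k} r_k \;=\; (t+\vr^{k})\bigl(2t^{2} + r_k t - \vr^{k} r_k\bigr),
\]
which is verified by expanding the right-hand side. Since $\alpha_k \geq 1$ forces $t \geq 0$, the factor $t + \vr^k$ is strictly positive, so the defining condition is equivalent to $2t^{2} + r_k t - \vr^{k} r_k = 0$; substituting $t = \alpha_k - 1$ turns this into the statement that $\alpha_k$ is a root of $x\mapsto r_k(x - 1 - \vr^k) + 2(x-1)^2$, as claimed. This quadratic (in $t$) has discriminant $r_k^2 + 8\vr^k r_k = r_k(r_k + 8\vr^k) > 0$ and product of roots $-\vr^k r_k/2 < 0$, hence exactly one nonnegative root, namely $t = \bigl(\sqrt{r_k(r_k+8\vr^k)} - r_k\bigr)/4$. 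This simultaneously justifies the well-definedness of $\alpha_k$ (the uniqueness asserted right after \eqref{eq:defining_alpha}) and yields $\alpha_k = 1 + \bigl(\sqrt{r_k(r_k+8\vr^k)}-r_k\bigr)/4$.

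Finally, I would feed this value of $t$ into the closed form $r_{k+1} = r_k + 2\vr^k + 2t$ obtained in the first step: $r_{k+1} = r_k + 2\vr^k + \tfrac12\bigl(\sqrt{r_k(r_k+8\vr^k)}-r_k\bigr) = \bigl(r_k + 4\vr^k + \sqrt{r_k(r_k + 8\vr^k)}\bigr)/2$, which is the stated recurrence.

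There is no serious obstacle here: the only thing one must spot is the cubic's factorization — equivalently, that $t = -\vr^k$ is a spurious root introduced when clearing the denominator — after which everything is a routine application of the quadratic formula. The only care needed is in tracking the branch $t \geq 0$ (i.e.\ $\alpha_k \geq 1$) so that exactly one root survives, which is what makes the uniqueness claim rigorous. I would also note that the base case $r_1 = 4$ fits: the recurrence then produces $r_2 = 3 + 2\sqrt 2 + \sqrt{4(4+8\vr)}/2$, consistent with $r_2 = 4 + 2\sqrt2 + 2\alpha_1$.
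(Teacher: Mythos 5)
Your proof is correct and follows essentially the same route as the paper: reduce \eqref{eq:defining_alpha} to a cubic in $\alpha_k-1$ via \Cref{lem:silver_identities} and the definition of $r_k$, factor out the spurious root $x=1-\vr^k$ (your factor $t+\vr^k$), and apply the quadratic formula together with $r_{k+1}=r_k+2(\vr^k-1)+2\alpha_k$; you simply make explicit the factorization the paper only asserts. The only blemish is a trivial arithmetic slip in your closing sanity check, where $3+2\sqrt2$ should read $4+2\sqrt2$ so that the two expressions for $r_2$ agree.
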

\begin{proof}
	Fix $k\geq 1$. We will rewrite the second constraint in \eqref{eq:defining_alpha}.
	Recall that $\hleft^{(k+1)} = [\hleft^{(k)},\alpha_k, \pi^{(k)}]$. 
	We will use \Cref{lem:silver_identities} and the definition of $r_k$ to simplify the partial sums and products in \eqref{eq:defining_alpha}.
	Thus, $\alpha_k$ is the unique root larger than $1$ of
	\begin{align}
		\label{eq:alpha_simplified_equation}
		\frac{1}{r_k + 2(\vr^k - 1) + 2x} - \frac{(x-1)^2}{\vr^{2k}r_k} = 0.
	\end{align}
	The quadratic in the lemma statement is derived from this formula by multiplying the denominators in \eqref{eq:alpha_simplified_equation} through to get a cubic equation in $x$. Then, we may verify that $x = 1-\vr^k$ is a root (smaller than 1) of the resulting cubic so that we may factor out this root to leave  a quadratic expression in $x$ with a unique root larger than $1$.
	
	The formula for $r_{k+1}$ follows from plugging in our formula for $\alpha_k$ into the identity
	\begin{gather*}
		r_{k+1} = r_k + 2(\vr^k -1) + 2\alpha_k.\qedhere
	\end{gather*}
\end{proof}
\begin{lemma}
	\label{lem:rk_growth}
	Let $k \geq 1$, then
	\begin{align*}
		r_k \simeq \left((\vr-1)\left(1 + \vr^{-1/2}\right)\right)\vr^k.
	\end{align*}
	Furthermore, $r_k \geq 4\vr^{k-1}$.
\end{lemma}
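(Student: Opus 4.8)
The plan is to reduce the statement to the analysis of a one–dimensional fixed-point iteration obtained by normalizing the explicit recurrence of \Cref{lem:explicit_alpha_mu}. Set $s_k \coloneqq r_k\vr^{-k}$, so that $s_1 = 4\vr^{-1} = 4(\sqrt2-1)$, and dividing $r_{k+1} = \tfrac12\bigl(r_k + 4\vr^k + \sqrt{r_k(r_k+8\vr^k)}\bigr)$ through by $\vr^{k+1}$ gives $s_{k+1} = g(s_k)$ for all $k\geq 1$, where
\[
  g(s) \coloneqq \frac{s + 4 + \sqrt{s^2 + 8s}}{2\vr}.
\]
On $[0,\infty)$ the map $g$ is continuous and strictly increasing (its derivative on $(0,\infty)$ is $\tfrac1{2\vr}\bigl(1 + \tfrac{s+4}{\sqrt{s^2+8s}}\bigr) > 0$), takes values in $[2/\vr,\infty)$, and satisfies $g(s) - s \to -\infty$ as $s\to\infty$; since also $g(0) = 2/\vr > 0$, the continuous function $g(s)-s$ changes sign, so $g$ has a positive fixed point.

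Next I would identify that fixed point. For $s\geq0$, the equation $g(s) = s$ is equivalent to $(2\vr-1)s - 4 = \sqrt{s^2+8s}$, which in particular forces $s \geq 4/(2\vr-1)$; squaring and using $(2\vr-1)^2 - 1 = 4\vr(\vr-1)$ yields $\vr(\vr-1)s^2 - 4\vr s + 4 = 0$. The two roots of this quadratic are $\tfrac{2}{\vr-1}\bigl(1\pm\vr^{-1/2}\bigr)$, and since $\vr-1 = \sqrt2$ gives $\tfrac{2}{\vr-1} = \vr-1$, they equal $(\vr-1)(1\pm\vr^{-1/2})$. Only $s^\star \coloneqq (\vr-1)(1+\vr^{-1/2})$ satisfies the sign requirement $s\geq 4/(2\vr-1)$ (one checks $s^\star > 2 > 4/(2\vr-1)$, whereas $(\vr-1)(1-\vr^{-1/2}) < 1 < 4/(2\vr-1)$), so $s^\star$ is the unique fixed point of $g$ on $[0,\infty)$. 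Consequently $g(s) > s$ for $0\leq s < s^\star$ and $g(s) < s$ for $s > s^\star$.

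Finally I would run the standard monotone-convergence argument. Since $s_1 = 4(\sqrt2-1) < 2 < s^\star$, an induction with hypothesis $s_k < s^\star$, using monotonicity of $g$ and the sign of $g(s)-s$, shows $s_k < s_{k+1} = g(s_k) < g(s^\star) = s^\star$ for every $k\geq1$; hence $(s_k)_{k\geq1}$ is strictly increasing and bounded above by $s^\star$, so it converges, and by continuity its limit is a fixed point of $g$, forcing it to equal $s^\star$. Thus $r_k = s_k\vr^k \simeq \bigl((\vr-1)(1+\vr^{-1/2})\bigr)\vr^k$. For the ``furthermore'' claim, monotonicity gives $s_k \geq s_1 = 4/\vr$ for all $k\geq1$, i.e.\ $r_k \geq 4\vr^{k-1}$.

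The only step that is not purely mechanical is pinning down the fixed point: one must verify that the relevant root of $\vr(\vr-1)s^2 - 4\vr s + 4$ is exactly the advertised constant $(\vr-1)(1+\vr^{-1/2})$ and discard the extraneous root created by squaring, both of which follow from the single identity $(\vr-1)^2 = 2$ (equivalently $\vr^2 = 2\vr+1$). If one wanted a rate of convergence in addition to the bare $\simeq$, one could further note $0 < g'(s^\star) < 1$, yielding geometric convergence of $s_k$ to $s^\star$, but this is unnecessary for the statement as given.
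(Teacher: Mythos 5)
Your proposal is correct and follows essentially the same route as the paper: normalize to $s_k = r_k\vr^{-k}$, observe the one-dimensional iteration $s_{k+1}=g(s_k)$, and run the standard monotone-convergence argument to the fixed point $(\vr-1)(1+\vr^{-1/2})$. The only cosmetic difference is that you locate and certify uniqueness of the fixed point by squaring and discarding the extraneous root, whereas the paper argues uniqueness from concavity of $g$ and then verifies the explicit value; your version also makes the ``furthermore'' bound $r_k\geq 4\vr^{k-1}$ explicit via $s_k\geq s_1=4/\vr$, which the paper leaves implicit.
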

\begin{proof}
	We will consider the sequence $\gamma_k\coloneqq r_k \vr^{-k}$. By \Cref{lem:explicit_alpha_mu}, we have that
	\begin{align*}
		\gamma_{k+1} &\coloneqq \frac{r_{k+1}}{\vr^{k+1}}
		= \frac{r_k+4\vr^k+\sqrt{r_k(r_k+ 8\vr^k)}}{2\vr^{k+1}}
		= \frac{4 + \gamma_k + \sqrt{\gamma_k(\gamma_k + 8)}}{2\vr}.
	\end{align*}
	Thus, after defining
	\begin{align*}
		f(x) \coloneqq \frac{4 + x + \sqrt{x(x + 8)}}{2\vr},
	\end{align*}
	we may write the recurrence for $\gamma_k$ as
	$\gamma_1 = \frac{4}{\vr}$ and 
	$\gamma_{k+1} = f(\gamma_k)$.
	
	Now, note that $f$ is an increasing concave function satisfying $f(0)>0$ and that $\lim_{x\to\infty}f'(x) < 1$.
	Thus, there is a unique positive solution $\gamma^*$ to the equation $f(x) = x$.
	One may check that $\gamma^*$ is given explicitly by $\gamma^* = (\vr-1)(1 + \sqrt{\vr^{-1}})$.
	
	Our next goal is to show that $\gamma_k$ is an increasing sequence contained in $[0,\gamma^*)$.
	First, note that $\gamma_0 = 4/\vr <\gamma^*$.
	Inductively, suppose $\gamma_k\in [0,\gamma^*)$. As
	$f(x) - x >0$ on all of $[0,\gamma^*)$, we deduce that $\gamma_{k+1} = f(\gamma_k) >\gamma_k$.
	Next, we claim that $\gamma_{k+1}<\gamma^*$. Supposing otherwise, we would have that $f(\gamma_k)\geq \gamma^* = f(\gamma^*) > f(\gamma_k)$, where the last step follows as $f$ is an increasing function.
	Thus, $\gamma_{k+1} \in(\gamma_k, \gamma^*)$.
	
	Finally, we claim that $\lim_{k\to\infty}\gamma_k = \gamma^*$. This limit exists as $\gamma_k$ is an increasing bounded sequence. Call the limit $\gamma'$. Then
	$f(\gamma') = \lim_{k\to\infty} f(\gamma_k) = \lim_{k\to\infty} \gamma_k = \gamma'$. As $\gamma^*$ is the unique nonnegative solution to $f(\gamma) = \gamma$, we conclude that $\gamma' = \gamma^*$.
\end{proof}

\begin{lemma}
	For $k\geq 0$, it holds that $\beta_k < \alpha_k < \beta_{k+1}$.
\end{lemma}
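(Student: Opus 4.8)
The plan is to reduce both inequalities to elementary algebra about $r_k$, using the closed form for $\alpha_k$ from \Cref{lem:explicit_alpha_mu} together with the growth bound $r_k \geq 4\vr^{k-1}$ from \Cref{lem:rk_growth}. By \Cref{lem:explicit_alpha_mu}, $\alpha_k = 1 + \tfrac{1}{4}\bigl(\sqrt{r_k(r_k+8\vr^k)} - r_k\bigr)$, and since $8 r_k \vr^k > 0$ we have $\sqrt{r_k(r_k+8\vr^k)} > r_k$, so indeed $\alpha_k > 1$. Substituting this together with $\beta_k = 1 + \vr^{k-1}$ and $\beta_{k+1} = 1 + \vr^k$, the claim $\beta_k < \alpha_k < \beta_{k+1}$ becomes equivalent to
\[
  r_k + 4\vr^{k-1} \;<\; \sqrt{r_k(r_k+8\vr^k)} \;<\; r_k + 4\vr^k .
\]
Since $r_k > 0$, all three quantities are strictly positive, so each of the two inequalities can be checked by squaring.

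For the upper bound, I would square $\sqrt{r_k(r_k+8\vr^k)} < r_k + 4\vr^k$: expanding and cancelling $r_k^2 + 8 r_k \vr^k$ from both sides leaves $0 < 16\vr^{2k}$, which always holds. Hence $\alpha_k < \beta_{k+1}$ needs nothing beyond $r_k > 0$.

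For the lower bound, I would square $r_k + 4\vr^{k-1} < \sqrt{r_k(r_k+8\vr^k)}$; after cancelling $r_k^2$ this reduces to $8 r_k \vr^{k-1}(\vr - 1) > 16\vr^{2k-2}$, i.e.\ to $r_k > 2\vr^{k-1}/(\vr - 1) = \sqrt{2}\,\vr^{k-1}$, using $\vr - 1 = \sqrt{2}$. This is implied by $r_k \geq 4\vr^{k-1}$ from \Cref{lem:rk_growth} since $4 > \sqrt{2}$, which gives $\beta_k < \alpha_k$. That growth bound is stated for $k \geq 1$; for $k = 0$ the defining sum and product for $r_0$ are empty, so $r_0 = 1$, and the required inequality $r_0 > \sqrt{2}\,\vr^{-1} = 2 - \sqrt{2}$ holds directly.

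I do not anticipate a genuine obstacle: the only care needed is to confirm the positivity of the expressions being squared (so that the squaring steps are true equivalences) and to match the range of $k$ in the statement against that of the invoked growth bound, which is why the $k = 0$ case receives its own one-line check.
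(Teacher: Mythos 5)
Your proof is correct and follows essentially the same route as the paper's: both arguments rest on the characterization of $\alpha_k$ from \Cref{lem:explicit_alpha_mu} (you use the explicit radical formula, the paper evaluates the sign of the quadratic $q$ at $\beta_k$ and $\beta_{k+1}$), and both reduce the lower bound $\beta_k<\alpha_k$ to the condition $r_k>\sqrt{2}\,\vr^{k-1}$, discharged via $r_k\geq 4\vr^{k-1}$ from \Cref{lem:rk_growth}. Your separate one-line check of $k=0$ (where $r_0=1$ and the growth bound of \Cref{lem:rk_growth} does not apply) is a welcome addition, since the paper's proof silently invokes that bound outside its stated range there.
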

\begin{proof}
    Fix $k\geq 0$ and let $q(x)\coloneqq r_k(x - 1 - \vr^k) + 2(x-1)^2$.
	By \Cref{lem:explicit_alpha_mu}, it holds that $\alpha_k$ is the unique root of this quadratic larger than $1$.
	Now, note that $q(1)<0$ and $q(\beta_{k+1})>0$. We conclude that $\alpha_k < \beta_{k+1}$.
	We will use the same strategy to show that $\alpha_k\geq \beta_k$. Specifically, note that
	\begin{align*}
		q(\beta_k) &= r_k(\beta_k - \beta_{k+1}) + 2(\beta_k-1)^2\\
		&= \vr^{k-1}(1-\vr)r_k + 2\vr^{2(k-1)}\\
		&\leq 4\vr^{2(k-1)}(1-\vr) + 2\vr^{2(k-1)}\\
		&= \left(4(1-\vr) + 2\right)\vr^{2(k-1)}
		<0.
	\end{align*}
	Here, the first inequality follows from \Cref{lem:rk_growth}.
\end{proof}

\begin{lemma}
	\label{lem:r-formulas}
	For all $k\geq0$, it holds that
	\begin{gather*}
		r_k = 2\frac{(\alpha_{k}-1)^2}{\beta_{k+1}-\alpha_{k}}, \qquad \text{and}\qquad
		r_{k+1} = 2\frac{(\beta_{k+1}-1)^2}{\beta_{k+1}-\alpha_k}.
	\end{gather*}
	In particular, $\frac{\sqrt{r_k}}{\alpha_{k}-1} =  \frac{\sqrt{r_{k+1}}}{\beta_{k+1}-1}$.
\end{lemma}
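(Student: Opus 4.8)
The plan is to read off the first identity directly from the quadratic characterization of $\alpha_k$ in \Cref{lem:explicit_alpha_mu}, and then obtain the second by combining it with the recurrence $r_{k+1} = r_k + 2(\vr^k - 1) + 2\alpha_k$ already appearing in that lemma's proof.

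First I would rewrite the quadratic $q(x) = r_k(x - 1 - \vr^k) + 2(x-1)^2$ of \Cref{lem:explicit_alpha_mu} using $\vr^k = \beta_{k+1} - 1$ (immediate from $\beta_{k+1} = 1 + \vr^k$), so that $q(x) = r_k(x - \beta_{k+1}) + 2(x-1)^2$. Since $\alpha_k$ is a root of $q$, we get $r_k(\alpha_k - \beta_{k+1}) + 2(\alpha_k - 1)^2 = 0$, and dividing by $\beta_{k+1} - \alpha_k$ — which is strictly positive by the preceding lemma — yields $r_k = 2(\alpha_k-1)^2/(\beta_{k+1} - \alpha_k)$, the first claimed formula.

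Next I would establish the second identity. From the definition of $r_{k+1}$, the recursion $\hright^{(k+1)} = [\pi^{(k)}, \alpha_k, \hright^{(k)}]$, and \Cref{lem:silver_identities}, one has $r_{k+1} = r_k + 2(\vr^k - 1) + 2\alpha_k$, which after substituting $\vr^k = \beta_{k+1} - 1$ reads $r_{k+1} = r_k + 2\beta_{k+1} + 2\alpha_k - 4$. Multiplying through by $\beta_{k+1} - \alpha_k$ and plugging in the first identity $r_k(\beta_{k+1}-\alpha_k) = 2(\alpha_k - 1)^2$ reduces the desired equality $r_{k+1}(\beta_{k+1}-\alpha_k) = 2(\beta_{k+1}-1)^2$ to the polynomial identity
\[
 2(\alpha_k - 1)^2 + (2\beta_{k+1} + 2\alpha_k - 4)(\beta_{k+1} - \alpha_k) = 2(\beta_{k+1} - 1)^2,
\]
which, upon expanding, has all the $\alpha_k$-dependent terms cancel and collapses to $2 + 2\beta_{k+1}^2 - 4\beta_{k+1} = 2(\beta_{k+1}-1)^2$ — a one-line check.

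Finally, since $\alpha_k > 1$ and $\beta_{k+1} > \alpha_k$, every quantity in sight is positive, so taking square roots of the two identities gives $\sqrt{r_k}/(\alpha_k - 1) = \sqrt{2}/\sqrt{\beta_{k+1} - \alpha_k} = \sqrt{r_{k+1}}/(\beta_{k+1} - 1)$, which is the ``in particular'' statement. I do not expect any genuine obstacle here: the content is entirely the rearrangement of the defining quadratic, the only care needed being that $\beta_{k+1} - \alpha_k \neq 0$ so the divisions are legitimate (supplied by the preceding lemma), and — if one wants the $k = 0$ endpoint where the general machinery is vacuous — a direct check that $\alpha_0 = 3/2$, $\beta_1 = 2$, $r_0 = 1$, $r_1 = 4$ satisfy both formulas.
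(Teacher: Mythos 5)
Your proof is correct and follows essentially the same route as the paper's: read the first identity off the defining quadratic $q(x)=r_k(x-1-\vr^k)+2(x-1)^2$ at its root $\alpha_k$, combine it with the sum recurrence $r_{k+1}=r_k+2(\alpha_k-1)+2(\beta_{k+1}-1)$ to get the second, and take square roots for the final claim (the paper merges the fractions over a common denominator where you clear denominators and check a polynomial identity — the same algebra). Your explicit treatment of the $k=0$ endpoint is a small extra care the paper glosses over, since its cited \Cref{lem:explicit_alpha_mu} is only stated for $k\geq 1$.
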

\begin{proof}
	Let $q(x)\coloneqq r_k(x - 1 - \vr^k) + 2(x-1)^2$.
	By \Cref{lem:explicit_alpha_mu}, it holds that $\alpha_k$ is the unique root of this quadratic larger than $1$. Thus,
	\begin{align*}
		2(\alpha_{k}-1)^2 - r_k(\beta_{k+1} - \alpha_{k}) = 0.
	\end{align*}
	Solving this equation for $r_k$ shows that
	$
	r_k = 2\frac{(\alpha_{k}-1)^2}{\beta_{k+1}-\alpha_{k}}.
	$
	Recall that
    $r_{k+1} = 1 + 2\sum_{i=0}^{2^{k+1} - 2} (\hright^{(k+1)})_i$. Thus, by \Cref{lem:silver_identities}, we have that    
    $r_{k+1} = r_k + 2((\alpha_{k}-1) + (\beta_{k+1}-1)) $, so
	\begin{align*}
		r_{k+1} &= 2\frac{(\alpha_{k}-1)^2}{(\beta_{k+1}-1)-(\alpha_{k}-1)} + 2((\alpha_{k}-1) + (\beta_{k+1}-1))\\
		&= 2\frac{(\alpha_{k}-1)^2 + \left((\beta_{k+1}-1)^2 - (\alpha_{k}-1)^2\right)}{(\beta_{k+1}-1)-(\alpha_{k}-1)}\\
		&= 2\frac{(\beta_{k+1}-1)^2}{\beta_{k+1}-\alpha_{k}}.
	\end{align*}
	Setting these two formulas for $r_k$ equal 
	and taking the square root implies the last claim.
\end{proof}

\begin{lemma} \label{lem:sum_ck}
	For any $k\geq 1$, $\sum_{i=0}^{2^k-1} (c_k)_i = \sqrt{r_k}.$
\end{lemma}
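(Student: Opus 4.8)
The plan is a straightforward induction on $k$, driven by the recursive definition of $c_k$ together with the sum/product identities collected in Appendix~\ref{sec:useful_identitites}. For the base case $k=1$, we have $c_1=(1,1)$ and $r_1=4$, so $\sum_i (c_1)_i = 2 = \sqrt{r_1}$.

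For the inductive step, assume $\sum_{i=0}^{2^k-1}(c_k)_i=\sqrt{r_k}$. Using $c_{k+1}=\bigl(\tfrac{1}{\sqrt{r_{k+1}}}\pi^{(k)},\,\tfrac{\beta_{k+1}}{\sqrt{r_{k+1}}},\,c_k\bigr)$, the identity $\sum_{i=0}^{2^k-2}\pi^{(k)}_i=\vr^k-1$ from \Cref{lem:silver_identities}, and $\beta_{k+1}=1+\vr^k$, one computes
$$\sum_{i=0}^{2^{k+1}-1}(c_{k+1})_i = \frac{(\vr^k-1)+(1+\vr^k)}{\sqrt{r_{k+1}}} + \sqrt{r_k} = \frac{2\vr^k}{\sqrt{r_{k+1}}} + \sqrt{r_k}.$$
Thus the lemma reduces to the scalar identity $\sqrt{r_k}+\dfrac{2\vr^k}{\sqrt{r_{k+1}}}=\sqrt{r_{k+1}}$, equivalently (after multiplying by $\sqrt{r_{k+1}}$ and rearranging) $\sqrt{r_k r_{k+1}}+2\vr^k = r_{k+1}$.

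I would establish this scalar identity from \Cref{lem:r-formulas}, which gives $r_k = 2(\alpha_k-1)^2/(\beta_{k+1}-\alpha_k)$ and $r_{k+1}=2(\beta_{k+1}-1)^2/(\beta_{k+1}-\alpha_k)=2\vr^{2k}/(\beta_{k+1}-\alpha_k)$, using $\beta_{k+1}-1=\vr^k$. Since $\alpha_k>1$ and $\alpha_k<\beta_{k+1}$ (from the construction and the lemma $\beta_k<\alpha_k<\beta_{k+1}$), all relevant quantities are positive, so $\sqrt{r_k r_{k+1}}=2\vr^k(\alpha_k-1)/(\beta_{k+1}-\alpha_k)$ and hence
$$\sqrt{r_k r_{k+1}}+2\vr^k = \frac{2\vr^k\bigl((\alpha_k-1)+(\beta_{k+1}-\alpha_k)\bigr)}{\beta_{k+1}-\alpha_k} = \frac{2\vr^k\cdot\vr^k}{\beta_{k+1}-\alpha_k} = r_{k+1},$$
closing the induction. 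Alternatively, squaring the explicit recurrence $r_{k+1}=\tfrac12\bigl(r_k+4\vr^k+\sqrt{r_k(r_k+8\vr^k)}\bigr)$ of \Cref{lem:explicit_alpha_mu} yields $r_{k+1}^2-(r_k+4\vr^k)r_{k+1}+4\vr^{2k}=0$, i.e.\ $(r_{k+1}-2\vr^k)^2=r_k r_{k+1}$, which combined with $r_{k+1}>2\vr^k$ from \Cref{lem:rk_growth} gives the same conclusion.

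There is no deep obstacle here; the proof is short. The only point requiring care is the bookkeeping of square roots — verifying that $\alpha_k-1$, $\beta_{k+1}-\alpha_k$, and $r_{k+1}-2\vr^k$ are nonnegative so that $\sqrt{ab}=\sqrt a\,\sqrt b$ and the equation $r_{k+1}-2\vr^k=\sqrt{r_k r_{k+1}}$ may be taken with the positive root — and all of these sign facts are already recorded in Appendix~\ref{sec:useful_identitites}.
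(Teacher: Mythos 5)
Your proof is correct and follows essentially the same route as the paper's: the same induction on $k$, the same expansion of the recursive definition of $c_{k+1}$ using $\sum_i \pi^{(k)}_i = \vr^k-1$ and $\beta_{k+1}=1+\vr^k$, and the same reduction to the scalar identity $r_k r_{k+1} = (r_{k+1}-2\vr^k)^2$. The only difference is that you spell out two verifications of that identity (via \Cref{lem:r-formulas} and via squaring the recurrence in \Cref{lem:explicit_alpha_mu}), whereas the paper simply asserts it is implied by \Cref{lem:explicit_alpha_mu}.
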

\begin{proof}
	For $k=1$, $\sum_{i=0}^{1} (c_1)_i = (c_1)_0 + (c_1)_1 = 2$ while $r_1 = 2(\hright^{(1)})_0 + 1 = 4$. For $k>1$, this follows inductively as
	\begin{align*}
		\sum_{i=0}^{2^k-1} (c_k)_i &= \frac{1}{\sqrt{r_{k}}} \sum_{i=0}^{2^{k-1} - 2} (\pi^{(k-1)})_i + \frac{\beta_k}{\sqrt{r_{k}}} + \sum_{i=0}^{2^{k-1}-1} (c_{k-1})_i\\
		&= \frac{2\vr^{k-1}}{\sqrt{r_k}} + \sqrt{r_{k-1}}\\
		&= \sqrt{r_{k}}		
	\end{align*}
	where the first equality expands the recursive definition of $c_k$, the second uses Lemma~\ref{lem:silver_identities} and the inductive hypothesis, and the third equality is equivalent to the identity
	$ r_k r_{k-1} = (r_k -2 \vr^{k-1})^2 $
	which is implied by the formula for $r_k$ in Lemma~\ref{lem:explicit_alpha_mu}.
\end{proof}

\end{document}